\documentclass[reqno]{amsart}

\usepackage{amsthm}
\usepackage{amssymb}
\usepackage{mathrsfs}
\usepackage{graphicx}
\usepackage{mathtools}
\usepackage{todonotes}
\usepackage{enumitem}
\usepackage{xcolor}

{
\setcounter{enumi}{0}

\begin{enumerate}}%
{\end{enumerate} }

\makeatletter

\renewcommand*{\descriptionlabel}[1]{%
  \let\orglabel\label
  \let\label\@gobble
  \phantomsection
  \edef\@currentlabel{#1\unskip}%
  \let\label\orglabel
  \hspace\labelsep \upshape\bfseries #1%
}
\makeatother

{%
\setcounter{enumi}{0}

\begin{enumerate}}%
{\end{enumerate} }

\newcommand{\R}{\mathbb{R}}

	\newcommand{\Z}{\mathbb{Z}}

\def\N{\mathbb{N}}

\usepackage[colorlinks, citecolor=blue, linkcolor=blue, urlcolor=blue]{hyperref}

\newtheorem{thm}{Theorem}[section]
\newtheorem{lem}[thm]{Lemma}
\newtheorem{prop}[thm]{Proposition}
\newtheorem{cor}[thm]{Corollary}

\theoremstyle{definition}

\newtheorem{rem}[thm]{Remark}

\numberwithin{equation}{section}

\newcommand{\nc}{\normalcolor}

\allowdisplaybreaks
\DeclareRobustCommand{\SkipTocEntry}[5]{}

\title{Higher-order estimates of highest waves of the Whitham equation}
\author{Robin Østern Lien}
\thanks{Department of Mathematical Sciences,
Norwegian University of Science and Technology, H\o{}gskoleringen 1,
7034 Trondheim, Norway.\\
\hspace*{1.5em}Email: \texttt{robin.o.lien@ntnu.no}.}

\begin{document}
\maketitle

\makeatletter
\newcommand{\myitem}[1]{%
\item[#1]\protected@edef\@currentlabel{#1}%
}
\makeatother

\begin{abstract}
 The Whitham equation has given its name to a wider family of nonlocal, nonlinear equations with very weak dispersion, which all feature highest waves. Recent work by Ehrnström, Maehlen and Varholm establishes leading-order asymptotics at the crest of solutions to such Whitham-type equations, and conjectures similar expansions for all derivatives of the solution. By extending their techniques we confirm the conjecture for a range of equations with highest waves of Hölder regularity $C^{s}$ for  $s\in[0.35,1)$. We do this by strong induction, redistributing difference operators to deal with singularities in higher-order derivatives arising from integral convolution kernels.  The restriction $s\geq0.35$ arises solely from the separate argument establishing the zeroth-order asymptotics, which requires a uniform sign estimate that we verify using rigorous interval arithmetic. The higher-order induction itself applies for every $s\in(0,1)$, so any extension of the zeroth-order result immediately yields the corresponding higher-order expansions.
 \end{abstract}

\section{Introduction}
\noindent The Whitham Equation 
\begin{equation}
    \label{eq::whitham_equation}
    \partial_t \phi+ \partial_x(K * \phi + \phi^2)=0
\end{equation}
is a nonlocal and nonlinear dispersive shallow-water equation, modeling the surface profile $\phi=\phi(t,x)$ in time \(t\) and one-dimensional space \(x\). The kernel $K$ is defined via its Fourier transform,
\begin{equation}
\label{eq:integral_kernel_K}
    \hat{K}(\xi) = \int_{\mathbb{R}}K(x)e^{-ix\xi} \,\text{d}x =\sqrt{\frac{\text{tanh }\xi}{\xi}}.
\end{equation}
The equation \eqref{eq::whitham_equation} was introduced by Whitham \cite{Whitham1967VariationalMA},
who proposed replacing the approximate linear dispersion relation of the Korteweg--de Vries (KdV) equation by the exact linear dispersion relation of the gravity water-wave problem, while retaining its quadratic nonlinearity. Whitham conjectured that the interplay between the weak dispersion and the nonlinearity would give rise to a rich variety of phenomena, including wave breaking and highest waves. 

As with many other shallow water-wave models, the Whitham equation \eqref{eq::whitham_equation} admits both periodic and solitary traveling-wave solutions of the form $\phi(t,x)=\varphi(x-ct)$. Small-amplitude periodic traveling waves were established in \cite{ehrnstrom_kalisch_small_periodic_waves} through local bifurcation, while solitary solutions were established in \cite{Ehrnstrom_2012_existenceStabilitySolitaryWaves} by constrained minimization and concentration--compactness. The latter argument was further refined and generalized to less regular nonlinearities in \cite{Hildrum_2020}. Since then, solitary waves have also been obtained through Orlicz maximization \cite{atanas_orlicz_maximisation},
 implicit-function arguments \cite{wright_atanas_implicit_function}, and as limits of periodic waves \cite{katerina2023}. As conjectured by Whitham, the equation also features wave breaking \cite{HUR2017410, wavebreaking_revisited} and, importantly for us, highest waves --- both periodic \cite{Ehrnstrom_2019_highestCuspedWave} and solitary \cite{Truong_2021}. This sets \eqref{eq::whitham_equation} apart from more strongly dispersive equations, such as the KdV equation, since elliptic regularity rules out singularities when the dispersive operator is of positive order.

When the dispersive operator has negative order, however, singular highest waves become possible. It was conjectured already by Stokes that the limiting periodic gravity  wave develops a corner of interior angle \(2\pi/3\) at its crest \cite{stokes1847}, a conjecture proved by Amick, Fraenkel and Toland \cite{amick_stokes_conjecture} and Plotnikov \cite{plotnikov_stokes_conjecture}. Peaked travelling waves also occur in completely integrable models such as the Camassa--Holm \cite{camassa1993} and Degasperis--Procesi \cite{DegasperisHolmHone2002} equations, where they arise in explicit form through the integrable structure of the equations. Although explicit, these solutions do not directly reveal how the singularity depends on the underlying dispersion (but see \cite{ARNESEN201925}). In contrast, Ehrnström and Wahlén showed in \cite{Ehrnstrom_2019_highestCuspedWave} that, for Whitham-type equations, the regularity of the highest wave is determined by the singularity of the dispersive kernel, thereby making precise Whitham's original observation that the dispersion relation should determine the qualitative behavior of the equation \cite{Whitham1967VariationalMA}.\footnote{Analogous relationships between the order of an operator and the regularity of its solutions are well known in free-boundary and obstacle problems, but they arise there for different reasons~\cite{caffarelli2008}.} Subsequent work revealed that this relationship is remarkably stable. Namely, for Whitham-type equations with both homogeneous and inhomogeneous dispersive operators, the optimal regularity of the highest waves depends on the operator's order: they are of Hölder regularity \(C^s\) for operators of order ${-s}\in (-1,0)$ 
\cite{afram2021steady, orke2021}, log-Lipschitz for operators of order \(-s = -1\) \cite{ehrnstrom_johnson_2019, ehrnstrom_2023_onThePreciseCuspedBehaviour}, and Lipschitz for operators of order \(-s < - 1\) \cite{le2020waves, bruell2018waves}.

The singularity of the highest waves is therefore not always characterized by an angle. For the Whitham equation the \emph{precise} form of the wave is determined by identifying the constants \(c_1\) and \(c_2\) in
\[
c_1 |x|^{\frac{1}{2}} \leq \varphi(0)-\varphi(x)\leq c_2 |x|^{\frac{1}{2}} \qquad \text{as} \quad x \to 0.
\]
This was done by Ehrnström, Mæhlen and Varholm \cite{ehrnstrom_2023_onThePreciseCuspedBehaviour}, who determined the optimal constants to be \(c_1=c_2=\sqrt{\tfrac \pi8}\), after it was conjectured in \cite{Ehrnstrom_2019_highestCuspedWave}. They further establish the next-order limit
\begin{align*}\label{eq:u'_limit}
\lim_{x \rightarrow 0}\frac{-\varphi'(x)}{x^{-\frac12}} = \frac{1}{2}\sqrt{\frac{\pi}{8}},
\end{align*}
for the first derivative of a highest wave. Moreover, these results are obtained not only for the Whitham equation, but for two families of Whitham-type equations corresponding to dispersive orders $-s=-\frac{1}{2}$ and $-s=-1$. In particular, the limiting constants depend only on the order $-s$, and are independent of the period, the homogeneous or inhomogeneous nature of the dispersive operator, and the precise form of the nonlinearity within the class of equations considered.

This paper generalizes the homogeneous case of \cite{ehrnstrom_2023_onThePreciseCuspedBehaviour}.  We extend the induction to arbitrary derivatives for every  $s \in (0,1)$, which turns out to be subtler than anticipated since the asymptotic behavior $u^{(n)}\sim x^{s-n}$ gives rise to non-integrable singularities when differentiating for $n\geq2$\nc. We redistribute the $x$- and $h$-difference operators (as in \(\varphi(x \pm h)\)) before repeated differentiation, thereby obtaining modified difference identities, which preserve integrability and allow us to close the arguments. The zeroth-order asymptotics require a different argument from the higher-order induction, and we establish them for every  $s\in[0.35,1)$\nc. This proof relies on uniform-in-$s$ sign estimates, which are obtained using outward-rounding interval arithmetic  and subdivision of the relevant parameter domains. \nc

\medskip \noindent
\textbf{Structure of the paper.}
In Section~\ref{sec:setup} we introduce the class of homogeneous kernels $K$ and the profile $u$ (a translated version of the highest wave), and the assumptions on these objects. Then we state the Main Theorem \ref{thm::u^n_limit} and derive the modified central-difference identity \eqref{eq::u^n_central_diff_equation} for $u$ and its derivatives $u^{(m)}$, which forms the foundation of the subsequent analysis.
In Section~\ref{sec:diff_est} we use \eqref{eq::u^n_central_diff_equation} to estimate the central differences of $u^{(m)}$. These estimates form the technical core of the proof and are designed so that the dominated convergence theorem applies in Section~\ref{sec:comp_lim}. \nc 
In Section~\ref{sec:comp_lim} we prove the Main Theorem \ref{thm::u^n_limit} by carrying out the induction step. Specifically, we consider the quotient of \eqref{eq::u^n_central_diff_equation} by $2h$, then take $h\to0$ (for fixed $x$) and $x\to0$ to compute the asymptotic behavior of $u^{(m+1)}$ at the cusp. Throughout, limits as $x\to 0$ are taken from the right.
The proof for the asymptotic behavior of $u(x)$ as $x\to 0$ does not fit into the same framework and is therefore treated separately in Appendix~\ref{appendix_u_lim}.
 Finally, Appendix~\ref{appendix_hypergeom} contains the hypergeometric calculations needed in the final step of the proof of the Main Theorem \ref{thm::u^n_limit} in Section \ref{sec:comp_lim}.

\section{Setup and main result}\label{sec:setup}
We will work under the following assumptions:
\begin{enumerate}[itemsep=2pt]    
    \myitem{($K_s$)}\label{assump_K} Let $s\in(0,1)$ and $C>0$. The kernel $K$ is even, positive, and can be decomposed into a singular and regular part
    \begin{align*}
        K(x)=K_s(x)+K_{\text{reg}}(x), \qquad K_s(x)=C|x|^{s-1},
    \end{align*}     
    where $K_\text{reg}$ is real analytic on $\R$. Furthermore, $K$ and all of its derivatives are exponentially decaying.
    
    \myitem{($U1$)}\label{assump_U0} 
     $u\in C^\infty(\mathbb{R}\setminus P\mathbb{Z})$,
where $P\in(0,\infty]$ and $P\mathbb Z=\{0\}$ when $P=\infty$. Furthermore, $u$ is $P$-periodic when $P<\infty$, bounded, even, and nonnegative, and satisfies $u(0)=0$ and 
    \begin{equation}\label{eq:u_second_difference}
        u(x)^2 = \int_0^\infty  \left(K(y+x)-2K(y)+K(y-x)\right)u(y) dy.
    \end{equation}    
    
    \myitem{($U2$)}\label{assump_Ulim} Assume that $K$ is convex on $\mathbb{R}^+$ and that $u$ is increasing on $[0,\nu]$.
\end{enumerate}
\begin{rem}
    (a) The assumptions mirror those in \cite{ehrnstrom_2023_onThePreciseCuspedBehaviour} and are motivated by the Whitham equation, which satisfies these assumptions with $u(x)=\varphi(0)-\varphi(x)$, $s=\frac{1}{2}$ and $C=(2\pi)^{-\frac{1}{2}}$ \cite{Ehrnstrom_2019_highestCuspedWave, ehrnstrom_2023_onThePreciseCuspedBehaviour}. The assumptions can be weakened, but for the sake of readability we do not dwell on this issue. 

    \medskip
    \noindent (b) $P=\infty$ corresponds to solitary waves. The proofs require no modification in this case, since the arguments near the cusp are local and the estimates away from the cusp only rely on the boundedness of $u$ and the decay of $K$ and its derivatives.
    
    \medskip
    \noindent  (c) \ref{assump_Ulim} is stated separately since it is only  used for the $n=0$ proof of Main Theorem \ref{thm::u^n_limit} given in Appendix \ref{appendix_u_lim}. \nc
\end{rem}
\noindent We now present the Main Theorem:

\begin{thm}[Main Theorem]
\label{thm::u^n_limit}
    Let  $s\in[0.35,1)$, \nc and assume 
    \ref{assump_K}, \ref{assump_U0} and \ref{assump_Ulim}. Then the $n$-th derivative of $u$ admits the limit
    \begin{align*}
        u^{(n)}_*:=\lim_{x \rightarrow 0}\frac{u^{(n)}}{x^{s-n}} =\frac{C}{2}\frac{\Gamma(s)^2}{\Gamma(2s)}(-1)^n\frac{\Gamma(n-s)}{\Gamma(-s)},  \qquad n\in\N \cup \{0\},
    \end{align*}
    where $\Gamma$ denotes the gamma function.
\end{thm}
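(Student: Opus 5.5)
The proof will be by strong induction on $n$. The base case $n=0$, namely $u(x)/x^s\to u_*:=\frac{C}{2}\frac{\Gamma(s)^2}{\Gamma(2s)}$, is the step that needs \ref{assump_Ulim} and the restriction $s\ge 0.35$. I plan to follow the squeeze argument of Ehrnström--Mæhlen--Varholm. Write $\limsup u/x^s=\overline L$ and $\liminf u/x^s=\underline L$; both are finite and positive by the known $C^s$ regularity and the lower bound. I will insert these into \eqref{eq:u_second_difference}, rescaled by $y=xt$. The singular part of the kernel gives
\[
C x^{2s}\int_0^\infty\bigl((t+1)^{s-1}-2t^{s-1}+|t-1|^{s-1}\bigr)\frac{u(xt)}{x^s}\,dt ,
\]
while the regular part contributes $O(x^2)=o(x^{2s})$. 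Convexity of $K$ makes the second difference sign-changing in a controlled way: it is negative near $t=1$ and positive elsewhere. Monotonicity of $u$ on $[0,\nu]$ then lets me bound the integral above and below by $\overline L,\underline L$ on the appropriate regions. The resulting pair of inequalities forces $\underline L=\overline L$, with the common value fixed by the Beta integral $\int_0^\infty(\dots)t^s\,dt$. Closing the squeeze needs a uniform sign of a certain $s$-dependent integral. I would verify this sign by interval arithmetic on subdivided $s$-intervals in $[0.35,1)$; this is where the lower bound on $s$ comes from.

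For the induction step, assume the limits $u^{(k)}_*$ hold for all $k\le m$. I would differentiate \eqref{eq:u_second_difference} $m$ times in $x$. The naive derivative $\partial_x^m K(y\pm x)$ produces non-integrable singularities $|y-x|^{s-1-m}$. To avoid them, I first rewrite the identity as a central-difference identity. In it, $u(x+h)^2-u(x-h)^2$ equals an integral of kernel differences against $u$. Using the evenness of $K$ and $u$, I shift the $x$-dependence from the kernel onto $u$ via $y\mapsto y\pm x$, so that after differentiating $m$ times the derivatives fall on $u^{(m)}(y\pm x)$. The kernel then carries only $h$-differences such as $K(z+h)-K(z-h)$, which remain integrable. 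By Leibniz, the left side becomes $\sum_j\binom{m}{j}\bigl(u^{(j)}u^{(m-j)}\bigr)$ evaluated as a central difference.

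Next I divide by $2h$, let $h\to0$ at fixed $x>0$, and then let $x\to0$ after rescaling $y=xt$. Both limit passages require dominated convergence. The technical core is therefore a set of bounds on central differences $|u^{(k)}(x+h)-u^{(k)}(x-h)|\lesssim h\,x^{s-k-1}$ (and cruder Hölder-type bounds when $h\sim x$), uniform in small $x$. I would obtain these from the induction hypothesis together with smoothness away from $P\mathbb Z$. The far-field contributions are $o(x^{2s-m-1})$ by the exponential decay of $K$ and its derivatives.

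In the limit this yields a linear identity. Its left side is $2u_*u^{(m+1)}_*$ plus known products $u^{(j)}_*u^{(m+1-j)}_*$. Its right side is $C\,u^{(m+1)}_*$ times a hypergeometric-type integral, plus lower-order terms built from known constants. Solving for $u^{(m+1)}_*$ and showing that it equals $u_*(-1)^{m+1}\Gamma(m+1-s)/\Gamma(-s)$, the coefficient of $x^{s-m-1}$ in $\partial^{m+1}(u_* x^s)$, is a Gamma/hypergeometric identity. This is consistent because $u_*x^s$ solves the homogeneous model equation exactly, so I would verify it by evaluating the relevant ${}_2F_1$ at $1$ via Gauss's formula. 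I expect the main obstacle to be the domination estimates near $t=1$, where $u^{(m)}$ differences and kernel singularities interact. The redistribution of differences is designed precisely to keep those integrands integrable uniformly in $h$ and $x$.
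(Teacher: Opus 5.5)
Your overall architecture is the paper's: the EMV squeeze for $n=0$, then a differentiated central-difference identity divided by $2h$, with $h\to0$ followed by $x\to0$. The induction step, however, has two genuine gaps.

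\textbf{First gap: the redistribution moves the singularity instead of removing it.} Putting all $x$-dependence on $u$ gives $u(x+h)^2-u(x-h)^2=-\int_0^\infty\delta_{2h}K(y)\bigl(u(x+y)-u(x-y)\bigr)\,dy$. After $m$ derivatives the integrand contains $u^{(m)}(x-y)\sim|x-y|^{s-m}$ near $y=x$ (your $t=1$), where $\delta_{2h}K(y)\approx 2hK'(x)\neq0$. This is non-integrable as soon as $m\ge2$, so the differentiated identity does not exist. The paper's identity \eqref{eq::u^n_central_diff_equation} instead splits the $y$-integral at $2x/3$:
\begin{itemize}
\item On $(0,2x/3)$ the differences stay on $u$. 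Then $u^{(m)}$ is only evaluated at $x\pm y\in(x/3,5x/3)$, against $\delta_{2h}K_s(y)$, whose only singularity (at $y=h$) is integrable.
\item Beyond $2x/3$, a change of variables puts both the $x$- and the $h$-differences on $K$. The derivatives then land on $K_s^{(m)}$ at arguments of modulus at least $2x/3$, while $u$ stays undifferentiated and bounded.
\end{itemize}
Differentiating the $x$-dependent endpoints produces the boundary terms $I_b$, and the whole identity is multiplied by $x^{1-2s}(x-h)^m$.

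\textbf{Second gap: the top-order difference bound.} You cannot obtain $|\delta_{2h}u^{(m)}(x)|\lesssim hx^{s-m-1}$ from the induction hypothesis plus smoothness away from $P\mathbb Z$. Via the mean value theorem it would require $|u^{(m+1)}|\lesssim x^{s-m-1}$, which is part of what you are proving, and qualitative smoothness gives nothing uniform as $x\to0$.

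Yet this bound is exactly what is needed to pass $x\to0$ in the term $I_1=-x^{1-2s}(x-h)^m\int_0^{2x/3}\delta_{2h}K_s(y)\,\delta_{2y}u^{(m)}(x)\,dy$. After dividing by $2h$ and letting $h\to0$, this becomes $C(1-s)\int_0^{2/3}\tau^{s-2}x^{m-s}\delta_{2\tau x}u^{(m)}(x)\,d\tau$. The size bound alone only gives the non-integrable majorant $\tau^{s-2}$.

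The paper derives the bound in Lemma~\ref{lem::u^(n)_limit_first_estimate} from the differentiated identity itself:
\begin{itemize}
\item Every term other than $I_1$ is $O(h)$ using lower-order information.
\item $I_1$ contains the unknown difference. It is handled by bootstrapping the weighted H\"older quotients $C_m(\alpha)$, using $\min(a,b)\le a^\lambda b^{1-\lambda}$ and iterating $\alpha_{k+1}=s+\lambda\alpha_k$ with $\lambda\in(1-s,1)$ until $\alpha>1$.
\end{itemize}
This bootstrap is needed already to obtain $u'_*$.

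\textbf{Problems in the base case.}
\begin{itemize}
\item $0<\underline L\le\overline L<\infty$ is not among the hypotheses. It must be derived from \eqref{eq:u_second_difference} using convexity and monotonicity.
\item Your sign picture is reversed. $\delta_1^2(|\tau|^{s-1})$ is negative on $(0,\tau_s^*)$, with $\tau_s^*\in(\tfrac12,0.72)$, and positive beyond, blowing up at $\tau=1$.
\item Monotonicity enters on the negative region through $u(\tau x)\le u(x)$. This produces the $\min(1,\sigma\tau^s)$ in $f_s$.
\item The sign to certify is $f_s(\sigma)>0$ for all $\sigma>1$, which is a two-parameter check, not a condition in $s$ alone.
\end{itemize}

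\textbf{Final constant.} Your consistency remark can in fact replace Appendix~\ref{appendix_hypergeom}. The limiting identity is linear in $u^{(m+1)}_*$ with coefficient $2u_*\neq0$, and by scaling it is satisfied exactly by $u_*|x|^s$ with kernel $K_s$. Gauss's formula at $1$, by contrast, does not apply directly, since $B(s,s-m)$ diverges as an integral for $m\ge1$.
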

\noindent  The restriction $s\in[0.35,1)$ enters only through the zeroth-order asymptotics $u_*$,\footnote{ As they discuss in their paper, the approach of \cite{ehrnstrom_2023_onThePreciseCuspedBehaviour} for the zeroth-order asymptotics only works for $s\in(s_0,1)$ for some $s_0\approx \frac{1}{3}$, but fails for $s\in(0,s_0]$. \nc} proved separately in Appendix \ref{appendix_u_lim}. The difference estimate of Section \ref{sec:diff_est} and limit computation in Section \ref{sec:comp_lim} are valid for every $s\in(0,1)$. Consequently, an extension of the zeroth-order result to all $s\in(0,1)$ would immediately extend the Main Theorem to the full range $s\in(0,1)$ as well.\nc

As discussed above, the Whitham equation satisfies all the assumptions of the theorem, so we immediately get the following corollary:
\begin{cor}
    Let $u(x):=\varphi(0)-\varphi(x)$, where $\varphi$ denotes a highest cusped traveling-wave solution of the Whitham equation \eqref{eq::whitham_equation} from \cite{Ehrnstrom_2019_highestCuspedWave}. Then 
    \begin{align*}
        \lim_{x \rightarrow 0}\frac{u^{(n)}(x)}{x^{\frac{1}{2}-n}} = (-1)^{n+1}\frac{\Gamma(n-\frac{1}{2})}{2\sqrt{8}}, \qquad n\in\N \cup \{0\},
    \end{align*}
    where $u^{(n)}$ denotes the $n$-th derivative of $u$ and $\Gamma$ denotes the gamma function.
\end{cor}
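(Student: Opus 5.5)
The corollary will follow from the Main Theorem~\ref{thm::u^n_limit} applied with $s=\tfrac12$. The proof has two parts: verify the hypotheses \ref{assump_K}, \ref{assump_U0} and \ref{assump_Ulim} for $u(x)=\varphi(0)-\varphi(x)$, and then evaluate the Gamma-function constant at $s=\tfrac12$, $C=(2\pi)^{-1/2}$.

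\textbf{Hypotheses.} For the kernel, I will quote \cite{Ehrnstrom_2019_highestCuspedWave, ehrnstrom_2023_onThePreciseCuspedBehaviour}, as summarized in Remark~(a). The Whitham kernel defined by \eqref{eq:integral_kernel_K} is even, positive and convex on $\R^+$, and it decays exponentially together with all its derivatives. Moreover $K(x)=(2\pi)^{-1/2}|x|^{-1/2}+K_{\text{reg}}(x)$ with $K_{\text{reg}}$ real analytic. This gives \ref{assump_K} with $s=\tfrac12$ and $C=(2\pi)^{-1/2}$, together with the convexity half of \ref{assump_Ulim}.

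For the profile, the highest wave $\varphi$ of \cite{Ehrnstrom_2019_highestCuspedWave} is even, $P$-periodic (or solitary, $P=\infty$), and smooth away from the crests $P\mathbb Z$. It is also nonincreasing from the crest on a half-period, with $\varphi\le\varphi(0)$. Hence $u=\varphi(0)-\varphi$ is bounded, even and nonnegative, satisfies $u(0)=0$, and is increasing on some $[0,\nu]$. These properties give \ref{assump_U0} and the remaining half of \ref{assump_Ulim}, except for \eqref{eq:u_second_difference}.

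To obtain \eqref{eq:u_second_difference}, I will use the form derived in \cite{ehrnstrom_2023_onThePreciseCuspedBehaviour}. Start from the traveling-wave equation $-c\varphi+K*\varphi+\varphi^2=0$ at $c=2\varphi(0)$. Substituting $\varphi=\varphi(0)-u$ gives $u^2 = K*u - (K*u)(0)$. Writing the convolution over $y>0$ using the evenness of $u$ and $K$ then yields the second-difference integral in \eqref{eq:u_second_difference}.

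\textbf{Constant.} With all hypotheses in place, the Main Theorem gives the limit $\tfrac C2\frac{\Gamma(s)^2}{\Gamma(2s)}(-1)^n\frac{\Gamma(n-s)}{\Gamma(-s)}$ at $s=\tfrac12$. Using $\Gamma(\tfrac12)^2=\pi$ and $\Gamma(1)=1$,
\[
\frac{C}{2}\frac{\Gamma(\frac12)^2}{\Gamma(1)}=\frac{\pi}{2\sqrt{2\pi}}=\sqrt{\frac{\pi}{8}}.
\]
Since $\Gamma(-\tfrac12)=-2\sqrt\pi$,
\[
\sqrt{\frac{\pi}{8}}\,(-1)^n\frac{\Gamma(n-\frac12)}{-2\sqrt\pi}=(-1)^{n+1}\frac{\Gamma(n-\frac12)}{2\sqrt8}.
\]
As consistency checks, this reproduces $\sqrt{\pi/8}$ for $n=0$ and $-\tfrac12\sqrt{\pi/8}$ for $n=1$, in agreement with \cite{ehrnstrom_2023_onThePreciseCuspedBehaviour}.

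\textbf{Main obstacle.} The only non-mechanical step is confirming \ref{assump_Ulim} and the exact normalization $C=(2\pi)^{-1/2}$ of the singular part of the kernel. For both I rely on the cited literature rather than re-deriving them.
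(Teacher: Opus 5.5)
Your proposal is correct and follows the paper: the corollary is obtained directly from the Main Theorem with $s=\tfrac12\in[0.35,1)$ and $C=(2\pi)^{-1/2}$, with the hypotheses taken from the cited literature as in Remark (a), and your evaluation of the constant is right. One small slip is in your $n=1$ consistency check, which should give $+\tfrac12\sqrt{\pi/8}$ rather than $-\tfrac12\sqrt{\pi/8}$: the formula gives $(-1)^{2}\Gamma(\tfrac12)/(2\sqrt8)>0$, and since $u'=-\varphi'$, the cited limit for $-\varphi'$ is a positive limit for $u'$.
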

\noindent We will prove Main Theorem \ref{thm::u^n_limit} through strong induction by adapting the framework from \cite{ehrnstrom_2023_onThePreciseCuspedBehaviour}, where they show the result for $n=0,1$ with $s=\frac{1}{2}$. To elucidate the inductive procedure we begin by presenting the main idea from \cite{ehrnstrom_2023_onThePreciseCuspedBehaviour} for establishing $u_*'$ from $u_*$, adapted to our setting. The starting point is the identity
 \begin{equation}\label{eq::u_central_difference_equation}\tag{$u$-diff}
        u(x+h)^2-u(x-h)^2 =-\int_0^{\infty}\delta_{2h}K(y)\delta_{2x}u(y)\text{ d}y,
\end{equation}
which follows from \eqref{eq:u_second_difference}, where $\delta_{2x}f := f(\cdot+x)-f(\cdot-x)$ denotes central differences. By the decomposition \ref{assump_K} we can write \eqref{eq::u_central_difference_equation} as
\begin{equation}
\label{u'''_limit_central_diff_eq_splitting}
\begin{aligned}
        u(x+h)^2-u(&x-h)^2 
        = -\bigg(\int_0^{\frac{2x}{3}}+
\int_{\frac{2x}{3}}^{2\nu}\bigg) \delta_{2h}K_{s}(y)\delta_{2x}u(y)\text{ d}y \\
&- \int_{0}^{2\nu}\delta_{2h}K_{\text{reg}}(y)\delta_{2x}u(y) dy - \int_{2\nu}^{\infty}\delta_{2h}K(y)\delta_{2x}u(y) dy,
\end{aligned}
\end{equation}
when $0<2h<x<\nu$, for a fixed, small enough $\nu$ isolating the origin. 
Note that 
\begin{align*}
    \lim_{x \rightarrow 0} \lim_{h\rightarrow 0} x^{1-2s}\frac{u(x+h)^2-u(x-h)^2}{2h} = \lim_{x \rightarrow 0} 2\frac{u(x)}{x^s} \frac{u'(x)}{x^{s-1}} = 2u_* u_*'.
\end{align*}
Computing the same limits for the right-hand side of \eqref{u'''_limit_central_diff_eq_splitting} then lets us solve for $u'_*$, but to rigorously justify this one needs difference estimates on $|\delta_{2h}u(x)|$ so the dominated convergence theorem applies. 

The expected asymptotic behavior $u^{(n)} \sim x^{s-n}$ near the origin introduces new technical challenges since $x^{s-n}$ has a non-integrable singularity at the origin for $n\geq 2$. These were not present in \cite{ehrnstrom_2023_onThePreciseCuspedBehaviour} where only $u_*$ was needed to establish $u_*'$. At infinity, however, $n\geq 2$ is helpful since the faster decay of $x^{s-n}$ makes the tail easier to control. Consequently, $u_*'$, which will be the base case in our proof, must be treated separately. \nc

The result for $n=0$ in Main Theorem \ref{thm::u^n_limit} does not follow from the same framework as for $n\geq 1$ and must be handled completely differently. We include the proof for completeness, but as the $n=0$ case is not our main focus we relegate it to Appendix \ref{appendix_u_lim}. The approach is essentially identical to \cite{ehrnstrom_2023_onThePreciseCuspedBehaviour}, however in their case the final part of the proof is greatly simplified as they consider the specific case $s=\frac{1}{2}$.  As the dependence on $s$ is highly involved in the expressions that appear, getting precise enough uniform bounds in $s$ on the range  $[0.35,1)$ \nc  is difficult.  We therefore use a computer-assisted approach by partitioning the relevant parameter domains into boxes and rigorously checking the sign on each box using outward-rounded interval arithmetic. \nc

\medskip
\noindent \textbf{The modified difference identity. } \nc Our first course of action is to obtain a difference identity analogous to 
\eqref{u'''_limit_central_diff_eq_splitting} for higher derivatives.  This identity will allow us to derive estimates on $\delta_{2h}u^{(n)}$
 and subsequently compute the asymptotic behavior of $u^{(n+1)}$. The natural idea is to differentiate \eqref{u'''_limit_central_diff_eq_splitting} $n$ times. To make this possible, we first redistribute the $h$- and $x$-difference operators so that each resulting integral contains only singular kernels that remain integrable after differentiation. Near the origin we keep the difference operators on $u$ to avoid non-integrable kernels, while away from the origin we place all differences on $K$, allowing us to exploit the boundedness of $u$. After suitable changes of variables and using that $u$ and $K$ are even functions, we obtain from \eqref{u'''_limit_central_diff_eq_splitting} the following identity when $0<2h<x<\nu$: 

\begin{equation}
\begin{aligned}
    &u(x+h)^2-u(x-h)^2 = -\int_{0}^{\frac{2x}{3}}\delta_{2h}K_{s}(y)\delta_{2y}u(x) dy \\
    &+ \int_{\frac{5x}{3}}^{2\nu+x}u(y)\big(\delta_{2h}K_{s}(x+y)+\delta_{2h}K_{s}(x-y)\big)dy +\int_{-\frac x3}^{\frac{5x}{3}}u(y)\delta_{2h}K_{s}(x+y)dy \\
    &+ \int_{-2\nu-x}^{2\nu+x}u(y)\delta_{2h}K_{\text{reg}}(x-y) dy + \int_{2\nu+x}^{\infty}u(y)\big(\delta_{2h}K(x+y)+\delta_{2h}K(x-y)\big) dy.
\end{aligned}\label{eq::u'''_limit_central_diff_eq_full_splitting}
\end{equation}
 Each integral on the right-hand side is absolutely integrable by \ref{assump_K} and \ref{assump_U0}. 
 
 Now we can differentiate \eqref{eq::u'''_limit_central_diff_eq_full_splitting} $m$ times to retrieve the $u^{(m)}$-difference identity. We begin by differentiating the left-hand side. By induction and Pascal's rule, 
\begin{align*}
    \frac{d^m}{dx^m}u(x\pm h)^2  = \sum_{i=0}^{m}{m \choose i}u^{(i)}(x\pm h)u^{(m-i)}(x\pm h).
    \end{align*}
Then we write
\begin{align*}
    \frac{d^m}{dx^m}\delta_{2h}\left(u(x)^2 \right)  
    &= \left(u(x+h)+u(x-h)\right)\delta_{2h} u^{(m)}(x) \\ 
    &\qquad +\frac{1}{2}\sum_{i=1}^m { m \choose i } \left(u^{(i)}(x+h)+u^{(i)}(x-h)\right)\delta_{2h}u^{(m-i)}(x) \\
    & \qquad + \frac{1}{2}\sum_{i=0}^{m-1} { m \choose i }\left(u^{(m-i)}(x+h)+u^{(m-i)}(x-h)\right) \delta_{2h} u^{(i)}(x),
\end{align*}
since we want to isolate the $\delta_{2h}u^{(m)}$-terms. Now we differentiate the right-hand side of \eqref{eq::u'''_limit_central_diff_eq_full_splitting} (this is justified below). Then we rearrange the differentiated equation, and multiply by $x^{1-2s}(x-h)^m$ which is needed to control the singularities at $x=h$ and $x=0$. This yields the $u^{(m)}$-difference identity for $m\geq 1$, $0<2h<x<\nu$:

\begin{equation}\label{eq::u^n_central_diff_equation}\tag{$u^{(m)}$-diff}
    \begin{aligned}
        D_0 :&= x^{1-2s}(x-h)^m\left(u(x+h)+u(x-h)\right)\delta_{2h}u^{(m)}(x) \\
        &= x^{1-2s}(x-h)^m \bigg[ -\int_{0}^{\frac{2x}{3}}\delta_{2h}K_{s}(y)\delta_{2y}u^{(m)}(x) dy \\
        &\hspace{4mm}  + \int_{\frac{5x}{3}}^{2\nu+x}u(y)\big(\delta_{2h}K^{(m)}_{s}(x+y)+\delta_{2h}K^{(m)}_{s}(x-y)\big) dy \\
        &\hspace{4mm}+ \int_{-\frac x3}^{\frac{5x}{3}}u(y)\delta_{2h}K^{(m)}_{s}(x+y)dy \\
        &\hspace{4mm}+\int_{-2\nu-x}^{2\nu+x}u(y)\delta_{2h}K^{(m)}_{\textnormal{reg}}(x-y)dy \\ 
        &\hspace{4mm}+ \int_{2\nu+x}^{\infty}u(y)\big(\delta_{2h}K^{(m)}(x+y)+\delta_{2h}K^{(m)}(x-y)\big)dy \\
        &\hspace{4mm} +\sum_{i=0}^{m-1} \delta_{2h}K^{(i)}_{s}\big(\tfrac{2x}{3}\big)\Big((-1)^{i}u^{(m-1-i)}\big(\tfrac{5x}{3}\big)+u^{(m-1-i)}\big(\tfrac{x}{3} \big) \Big)\\
        &\hspace{4mm} - \frac{ 1}{2}\sum_{i=1}^m { m \choose i } \left(u^{(i)}(x+h)+u^{(i)}(x-h)\right)\delta_{2h}u^{(m-i)}(x) \\
    & \hspace{4mm} - \frac{ 1}{2}\sum_{i=0}^{m-1} { m \choose i }\left(u^{(m-i)}(x+h)+u^{(m-i)}(x-h)\right) \delta_{2h}u^{(i)}(x) \bigg] \\
        &=:I_1+I_2+I_3+I_4+I_5+I_b + D_1 + D_2.
    \end{aligned}
\end{equation}
The difference terms $D_1,D_2$ were moved over from the left-hand side, while all boundary terms from differentiating the integrals are collected in $I_b$. The differentiation of the integrals is justified by considering forward difference quotients of each integral. Using the standard and integral version of the mean value theorem, which both hold by the smoothness of $u$ and $K$ away from their singularities, each difference can be rewritten as boundary terms and an integral whose integrand is bounded by an integrable function by the assumptions above. Hence the dominated convergence theorem applies.

\medskip
\section{Establishing the difference estimate}\label{sec:diff_est}
In this section we use \eqref{eq::u^n_central_diff_equation} to prove bounds on the central differences $\delta_{2h}u^{(m)}$ near the cusp (the origin). These bounds are necessary for the dominated convergence theorem to apply in the limit computation in Section~\ref{sec:comp_lim}.
\begin{lem} 
\label{lem::u^(n)_limit_first_estimate}
    Let  $n\in \N\cup\{0\}$, $s\in(0,1)$, and assume \ref{assump_K}, \ref{assump_U0}. Let $\nu>0$ such that $3\nu < \min\{1, P/2\}$, and $0<h<x<\nu$. Assume also 
    \begin{enumerate}[itemsep=2pt]
    \myitem{(A1)}\label{assump_1}
        There are constants $0<c_1\leq c_2$ such that $c_1x^{s-i} \leq |u^{(i)}(x)| \leq c_2x^{s-i}$ for all $i\in \{0,1,\dots,n \}$ uniformly in $(0,3\nu)$. 
    \end{enumerate}
Then the estimate  
        \begin{align}\label{eq::u^n_first_estimate}
            |x^{1-s}(x-h)^{m}(u^{(m)}(x+h)-u^{(m)}(x-h))|\lesssim h
        \end{align}\nc 
        holds uniformly on $(0, 2\nu )$ for all $m\in \{0,1,\dots,n\}$.
\end{lem}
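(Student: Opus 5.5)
The estimate follows from the mean value theorem and \ref{assump_1} alone when $m<n$. For the top order $m=n$, where no bound on $u^{(n+1)}$ is available, I would use the identity \eqref{eq::u^n_central_diff_equation} together with an absorption argument. I work with $0<h<x<\nu$, so that $x\pm h\in(0,2\nu)\subset(0,3\nu)$ and \ref{assump_1} applies at all points involved.

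\emph{Step 1: the case $h\geq \varepsilon x$, for any fixed $\varepsilon\in(0,\tfrac12]$ and any $m\leq n$.} By \ref{assump_1},
\[
|u^{(m)}(x+h)|+|u^{(m)}(x-h)|\lesssim (x-h)^{s-m}.
\]
For $m=0$ the right-hand side can be replaced by $x^s$. Hence the left-hand side of \eqref{eq::u^n_first_estimate} is at most
\[
x^{1-s}(x-h)^{s}\leq x\leq \varepsilon^{-1}h .
\]

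\emph{Step 2: the case $h<\varepsilon x$ with $m<n$.} The mean value theorem and \ref{assump_1} for $u^{(m+1)}$ give
\[
|\delta_{2h}u^{(m)}(x)|\leq 2h\sup_{\xi\in(x-h,x+h)}|u^{(m+1)}(\xi)|\lesssim h\,(x-h)^{s-m-1}.
\]
Since $x-h\geq x/2$, multiplying by $x^{1-s}(x-h)^m$ leaves $\lesssim h$. This settles all $m<n$. Together with \ref{assump_1}, it also gives the bounds $|\delta_{2h}u^{(i)}(x)|\lesssim h\,x^{s-i-1}(x/(x-h))^{\cdots}$ needed to control $D_1,D_2$.

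\emph{Step 3: the case $m=n$ with $h<\varepsilon x$, via \eqref{eq::u^n_central_diff_equation}.} Since $u(x+h)+u(x-h)\geq c_1(x+h)^s\gtrsim x^s$, we have
\[
|D_0|\gtrsim x^{1-s}(x-h)^n|\delta_{2h}u^{(n)}(x)|,
\]
so it suffices to bound every term on the right by $h$ plus a small multiple of the unknown quantity.

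The routine terms are handled as follows.
\begin{itemize}
\item In $I_2,I_3,I_5$ and $I_b$ the kernel $K_s^{(i)}$ is evaluated at distance $\gtrsim x$ from its singularity. Moving $\delta_{2h}$ onto the kernel by the mean value theorem gives $|\delta_{2h}K_s^{(i)}(z)|\lesssim h|z|^{s-i-2}$. Combined with $|u|\lesssim y^s$ (and boundedness of $u$ far away) and the bounds on $u^{(m-1-i)}$ at $x/3$ and $5x/3$, the prefactor $x^{1-2s}(x-h)^n$ turns each term into $O(h)$.
\item In $I_4$ and $I_5$ we additionally use the analyticity of $K_{\rm reg}$ and the exponential decay of $K$; these terms are even $O(hx^{1-2s+n})$.
\item $D_1$ and $D_2$ involve only $\delta_{2h}u^{(i)}$ with $i<n$, which Step 2 bounds. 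The exponents combine to $x^{1-2s}x^n\cdot x^{s-i}\cdot h\,x^{s-(n-i)-1}=h$. The top term with $i=n$ in $D_1$ pairs $u^{(n)}$ with $\delta_{2h}u$, and this is also fine.
\end{itemize}

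\emph{Main obstacle: the term $I_1$.} Here $\delta_{2y}u^{(n)}(x)$ is itself the unknown quantity, at step $y$. I would split $I_1$ at $y=\varepsilon x$.
\begin{itemize}
\item On $(\varepsilon x,2x/3)$, use $|\delta_{2y}u^{(n)}(x)|\lesssim x^{s-n}$ from \ref{assump_1} (since $x-y\geq x/3$) and $\int_{\varepsilon x}^\infty|\delta_{2h}K_s|\lesssim h(\varepsilon x)^{s-1}$. This gives $\lesssim \varepsilon^{s-1}h$.
\item On $(0,\varepsilon x)$, define
\[
M:=\sup\frac{x^{1-s}(x-h)^n|\delta_{2h}u^{(n)}(x)|}{h},
\]
the supremum taken over $x\in[\eta,\nu)$ and $h<\varepsilon x$. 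This is finite for each fixed $\eta>0$, because $u\in C^\infty$ on the compact set $\{x-h\geq \eta/2\}$. Then $|\delta_{2y}u^{(n)}(x)|\lesssim M y\,x^{s-1-n}$, and
\[
\int_0^{\varepsilon x}y|\delta_{2h}K_s(y)|\,dy\lesssim h(\varepsilon x)^{s}/s.
\]
Split this integral at $y=2h$: for $y>2h$ use $|\delta_{2h}K_s(y)|\lesssim h y^{s-2}$, and for $y<2h$ integrate $|K_s|$ directly. The result is that $I_1$ contributes $\lesssim M\varepsilon^s h$.
\end{itemize}
Collecting the bounds gives $M\leq C_\varepsilon + C\varepsilon^s M$ with $C$ independent of $\eta$. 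Choosing $\varepsilon$ small absorbs the last term, so $M\leq 2C_\varepsilon$ uniformly in $\eta$. Letting $\eta\to0$ and combining with Step 1 yields \eqref{eq::u^n_first_estimate} for $m=n$. The delicate points are tracking that all implicit constants are independent of $\eta$ and $\varepsilon$ except through the explicit powers of $\varepsilon$, and the bookkeeping of exponents in $D_1$ and $D_2$.
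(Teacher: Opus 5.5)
\textbf{Gap: the ``routine'' terms $I_2,I_4,I_5$ when $n=0$ and $s\geq\tfrac12$.} For $n=0$ the prefactor in the identity is just $x^{1-2s}$, with no compensating $(x-h)^n$. Your first-order bounds then do not give $O(h)$ when $s\geq\tfrac12$.
\begin{itemize}
\item For $I_4$ and $I_5$ you only get $|I_4|+|I_5|\lesssim hx^{1-2s}$. Your remark that these terms are ``even $O(hx^{1-2s+n})$'' is an improvement over $O(h)$ only when $n\ge1$.
\item For $I_2$ your bound is $hx^{1-2s}\int_{5x/3}^{3\nu}y^{2s-2}\,dy$. This is $\simeq hx^{1-2s}$ for $s>\tfrac12$ and $\simeq h\ln(\nu/x)$ for $s=\tfrac12$.
\end{itemize}
So the argument fails exactly in the Whitham case $s=\tfrac12$. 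That is also the case where the $n=0$ estimate is needed for the base case $u_*'$ of the Main Theorem.

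The missing idea is a cancellation between the $x+y$ and $x-y$ contributions. First fold $I_4$ onto $(0,2\nu+x)$ using that $u$ is even. Then each of $I_2,I_4,I_5$ contains (with $K$ replaced by $K_s$, $K_{\text{reg}}$ or $K$ as appropriate, all even)
\[
\delta_{2h}K(x+y)+\delta_{2h}K(x-y)=\delta_{2h}K(y+x)-\delta_{2h}K(y-x)=\int_{-h}^{h}\int_{-x}^{x}K''(y+r+t)\,dr\,dt .
\]
This is a mixed second difference of size at most $4hx\sup|K''|$ on $[y-x-h,y+x+h]$. The extra factor $x$ gives $|I_4|+|I_5|\lesssim x^{2-2s}h$ and
\[
|I_2|\lesssim x^{2-2s}h\int_{5x/3}^\infty y^{2s-3}\,dy\simeq h ,
\]
for every $s\in(0,1)$. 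This is how the paper handles its base case.

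\textbf{What is correct, and differs from the paper.} The rest of your argument goes through, and two parts take a simpler route than the paper.
\begin{itemize}
\item Step 2 correctly observes that for $m<n$ the estimate follows directly from the mean value theorem and (A1) for $u^{(m+1)}$. The paper's strong induction over $m$ is therefore unnecessary.
\item Your absorption argument for $I_1$ is a valid replacement for the paper's bootstrap. Restricting the supremum to $x\ge\eta$ makes $M$ finite a priori by smoothness of $u$ away from the cusp. Since $I_1$ only involves $\delta_{2y}u^{(n)}$ at the same base point $x$, the near part $(0,\varepsilon x)$ contributes $\lesssim\varepsilon^sMh$ with a constant independent of $\varepsilon$ and $\eta$, so it can be absorbed. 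This avoids the interpolation scheme $\alpha_{k+1}=s+\lambda\alpha_k$ for $C_n(\alpha_k)$ and the choice of $\lambda$.
\end{itemize}
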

\begin{rem}\label{rem:a1_implies}
Note that \ref{assump_1} holds for some $\nu$ if the limits $\lim_{x\rightarrow 0} x^{i-s}u^{(i)}(x)$ exist and are nonzero. Furthermore, by the evenness of $u$ we have that $|u^{(i)}|$ is even for all $i$, and consequently the assumption holds on $(-3\nu, 3\nu)$ by replacing $x$ with $|x|$. Furthermore, this and the $P$-periodicity of $u$ implies that the same bound holds around all the cusps: $c_1 |jP-x|^{s-i}\leq |u^{(i)}(x)|\leq c_2|jP-x|^{s-i}$ holds for all $i \in \{0,1,\dots, n \}$ uniformly in $(jP-3\nu, jP+3\nu)$ for all $j\in \Z$.    
\end{rem}
\begin{proof}
     Consider first $h \in [\tfrac x2,x)$. For all $m\in\{0,\dots,n\}$ we have by \ref{assump_1} that 
    \begin{align*}
        &|(x-h)^m(u^{(m)}(x+h)-u^{(m)}(x-h))| \\
        & \lesssim (x-h)^m((x+h)^{s-m}+(x-h)^{s-m}) \lesssim (\tfrac{x-h}{x+h})^m(x+h)^{s} +(x-h)^s\lesssim h^{s},
    \end{align*}
    since $x\pm h\leq 2h\pm h$ and $\frac{x-h}{x+h}\leq 1$. We then apply this to find that
    \begin{align*}
        &|x^{1-s}(x-h)^m(u^{(m)}(x+h)-u^{(m)}(x-h))|\lesssim x^{1-s} h^s \leq (2h)^{1-s}h^s \simeq h.
    \end{align*}\nc 
    
    We now proceed by strong induction to prove the claim for $h < \frac{x}{2}$. To this end we work with \eqref{eq::u^n_central_diff_equation}, which holds for $0<2h< x < \nu $, and we will estimate each term in the equation separately. Specifically, we will show that $D_0$ is estimated from below by $(x-h)^n x^{1-s}|\delta_{2h}u^{(n)}(x)|$  and that the terms on the right-hand side admit $O(h)$ bounds. The difficult term is $I_1$ which does not seem to admit the required $O(h)$ bound by direct estimation. To circumvent this we use a bootstrap argument inspired by the proof of \cite[Lemma 5.6]{ehrnstrom_2023_onThePreciseCuspedBehaviour}\footnote{ There is a small mistake in the computation preceding \cite[Equation (5.21)]{ehrnstrom_2023_onThePreciseCuspedBehaviour}: after the change of variables $y=\tau h$, the denominator should be $h^{1/2}$, not $x^{1/2}$. The erroneous factor gives the direct $O(h)$ estimate for their analogue of our $I_1$-term, but with the correct scaling this is precisely the term that needs to be handled by the bootstrap argument. }, adapting its more general form communicated by M.~Ehrnstr\"om. 
    
    The base case $n=0$ must be treated separately, and we save it for last to reuse as many computations as possible. In all following estimates we explicitly use that $0<2h<x<\nu$.
    
    \medskip\noindent \textbf{Inductive step $n\geq 1$.} Assume that \eqref{eq::u^n_first_estimate} holds for $m\in\{0, \dots, n-1\}$. We must show that it then also holds for $m=n$. 
    
    $\bullet$ \textbf{$D_0$:} For the left-hand side of \eqref{eq::u^n_central_diff_equation}, \ref{assump_1} and the nonnegativity of $u$  yield 
    \begin{align*}
        |D_0|  
        &\gtrsim x^{1-2s}(x-h)^n((x+h)^{s}+(x-h)^{s})|\delta_{2h}u^{(n)}(x)|  \geq (x-h)^nx^{1-s}|\delta_{2h}u^{(n)}(x)|.
    \end{align*}
    Now we estimate the right-hand side of \eqref{eq::u^n_central_diff_equation}. Note that $h\leq x^{1-s}h^s$.
    
    $\bullet$ \textbf{$D_1, D_2$:} By the triangle inequality and \ref{assump_1},
    \begin{align*}
        |D_1|
        &\lesssim \sum_{i=1}^n x^{s-i} x^{i} x^{1-2s} (x-h)^{n-i}|\delta_{2h}u^{(n-i)}(x)| \lesssim\sum_{i=1}^n h \simeq h.
    \end{align*}
    In the second inequality we use the inductive hypothesis. $|D_2|\lesssim h$ is analogous.

$\bullet$ \textbf{$I_b$:} 
     Note that 
     \begin{align}\label{eq:K_s_deriv}
         K_s^{(i)}(x)=C(-\text{sign}(x))^i\frac{\Gamma(i+1-s)}{\Gamma(1-s)}|x|^{s-(i+1)}. 
     \end{align}  
     By the triangle inequality, \eqref{eq:K_s_deriv}, and \ref{assump_1},
    \begin{align*}
        |I_b|& \lesssim \sum_{i=0}^{n-1} \left| \left(\tfrac{2}{3}x+h\right)^{s-(i+1)}-\left(\tfrac{2}{3}x-h\right)^{s-(i+1)}\right|x^{s-(n-1-i)}(x-h)^n x^{1-2s} \\
        & \leq \sum_{i=0}^{n-1} \left| \left(\tfrac{2}{3}+\tfrac hx\right)^{s-(i+1)}-\left(\tfrac{2}{3}-\tfrac hx \right)^{s-(i+1)} \right| x \leq  \sum_{i=0}^{n-1} 2\cdot6^{i+2-s}(i+1-s) \frac{h}{x}  x \lesssim h,
    \end{align*}
    where we use the mean value theorem and that $\frac{h}{x}\leq \frac{1}{2}$.
        
    $\bullet$ \textbf{$I_4, I_5$:} The final two integrals are easy to handle by \ref{assump_K} and since $u$ is bounded:
    \begin{align*}
        &|I_4| \lesssim h \nu^{n+{1-2s}} \|u\|_\infty \int_{-2\nu-x}^{2\nu+x}\frac{|\delta_{2h}K^{(n)}_{\text{reg}}(x-y)|}{2h}dy 
        \lesssim h\nu^{n+2-2s}\|u\|_\infty \|K_{\text{reg}}^{(n+1)}\|_\infty \simeq h,\\[2mm]
        & |I_5| 
         \lesssim \nu^{n+{1-2s}} \|u\|_\infty \int_{2\nu+x}^{\infty}\int_{-h}^h 2|K^{(n+1)}(x-y+t)| \, dt \, dy \\
         &\qquad \simeq \int_{-h}^h \int_{2\nu+x}^{\infty} 
         \big| K^{(n+1)}(x-y+t)\big| \, dy \, dt \simeq \int_{-h}^h  \, dt \simeq 
         h.
    \end{align*}
    In the last line we use that $\big|K^{(n+1)}(x-y+t)\big|$ is integrable on $(2\nu,\infty)$ since $K$ has exponentially decaying derivatives and $x-y+t$ is bounded away from zero. We also used that $n\geq 1 \implies (n+1-2s)>0$.

    $\bullet$ \textbf{$I_2, I_3$:} \ref{assump_1}, \eqref{eq:K_s_deriv} and the mean value theorem yield, for $\xi_y, \widetilde{\xi}_y\in(-1,1)$,
    \begin{align*}
         |I_2|& \lesssim x^{n+{1-2s}} \int_{\frac{5x}{3}}^{2\nu+x}y^{s}\Big( \frac{h}{(x+y-\xi_y h)^{n+2-s}}+ \frac{h}{|x-y-\widetilde{\xi}_y h|^{n+2-s}}\Big)\text{ d}y \\
        & \leq  x^{n+{1-2s}} h \int_{\frac{5x}{3}}^{2\nu +x} \frac{y^{s}}{(y-\frac{9}{10}y)^{n+2-s}} \text{ d}y  \simeq x^{n+{1-2s}} h \int_{\frac53}^{\frac{2\nu}{x} +1} \frac{x\text{ d}\tau}{(x\tau)^{n+2-2s}}  \lesssim  h,
    \end{align*}
    since the final integral converges on $(\tfrac53,\infty)$. We have used that $x+h \leq \frac{3}{2}x\leq \frac{9}{10}y$. By similar arguments, 
    \begin{align*}
        |I_3|&\lesssim 
         x^{n+{1-2s}} h \int_{-\frac{1}{3}}^{\frac{5}{3}} \frac{|x\tau|^s}{x^{n+2-s}(\tau+1-\frac{h}{x})^{n+2-s}} x \, d\tau \leq h \int_{-\frac{1}{3}}^{\frac{5}{3}}\frac{|\tau|^s}{(\tau+\frac{1}{2})^{n+2-s}}d\tau\simeq h,
    \end{align*} 
    since the final integral converges. Here we have used that by \ref{assump_1}, $u(y)\lesssim |y|^s$ on $(-\tfrac{x}{3}, \tfrac{5x}{3})$ since $u$ is even (see Remark \ref{rem:a1_implies}).

   $\bullet$ \textbf{Bootstrapping.} Combining what we have so far yields
   \begin{align} \label{eq:bootstrapping_object}
       (x-h)^nx^{1-s}|\delta_{2h}u^{(n)}(x)| \lesssim h+|I_1|.
   \end{align}
   Following the bootstrap and interpolation argument from \cite[Lemma 5.6]{ehrnstrom_2023_onThePreciseCuspedBehaviour}, define $$C_m(\alpha) := \sup_{0<h<x<\nu} \frac{(x-h)^m |\delta_{2h}u^{(m)}(x)|}{h^{\alpha}x^{s-\alpha}},$$ 
   where we emphasize that the supremum is in $h$ and $x$, but not $\nu$. Observe that $C_m(0) < \infty$ by \ref{assump_1}.  Furthermore, fix $\lambda \in (1-s,1)$ such that $s\frac{1-\lambda^k}{1-\lambda}\neq1$ for any nonnegative integer $k$. Such a $\lambda$ exists since the left-hand side is strictly increasing in $\lambda$ for $k\geq 2$ (and does not equal $1$ for $k=0,1$), so the set of excluded values is at most countable. Define $$\alpha_{k+1}:=s+\lambda \alpha_k \qquad \text{with} \qquad \alpha_0=0,$$
    and note by the formula for a geometric series that 
    \begin{align}\label{eq:bootstrap_alpha_limit}
        \alpha_{k} = s\frac{1-\lambda^k}{1-\lambda} \xrightarrow{k\to \infty} \frac{s}{1-\lambda}>1,
    \end{align}
    since $\lambda >1-s$. Let $k^*$ denote the first integer such that $\alpha_{k^*}>1$.

    For $y\in(0,\frac{2}{3}x)$, the definition of $C_m(\alpha)$ and  \ref{assump_1} yield respectively that
    \begin{align*}
        (x-y)^n|\delta_{2y}u^{(n)}(x)| \lesssim C_n(\alpha)y^\alpha x^{s-\alpha} \qquad \text{and} \qquad         (x-y)^n|\delta_{2y}u^{(n)}(x)| \lesssim x^s.
    \end{align*}
    We interpolate between these estimates using the well-known inequality $\min(a,b)\leq a^cb^{1-c}$ for all $a,b\geq0$ and $c\in[0,1]$:
    \begin{align*}
        (x-y)^n|\delta_{2y}u^{(n)}(x)| \lesssim C_n(\alpha)^\lambda y^{\alpha \lambda} x^{s-\alpha \lambda}.
    \end{align*}
    From this, the fact that $\frac{(x-h)^n}{(x-y)^n}\lesssim 1$, and \eqref{eq:bootstrapping_object} we get 
    \begin{align}\label{eq:bootstrapping_object_2}
         (x-h)^nx^{1-s}|\delta_{2h}u^{(n)}(x)| \lesssim h+C_n(\alpha)^\lambda x^{1-s-\lambda\alpha}\int_0^\frac{2x}{3}|\delta_{2h}K_s(y)|y^{\alpha \lambda} dy. 
    \end{align}
    Consider the integral. Note that 
    \begin{align}\label{eq:delta_K_s_identity}
        \delta_{2h}K_s(\tau h) = \frac{C}{|\tau h +h|^{1-s}}-\frac{C}{|\tau h - h|^{1-s}} 
        = C h^{s-1}\delta_2(|\tau|^{s-1}).
    \end{align}
    Changing variables $y=\tau h$ and splitting the integral, we therefore find that
    \begin{equation}\label{eq:I_1_int_auxilliary_computation}
    \begin{aligned}
        &\int_0^\frac{2x}{3}|\delta_{2h}K_s(y)|y^{q} dy \lesssim h^{s+q}\big(\int_0^\frac{4}{3} |\delta_2(|\tau|^{s-1})| \tau^q d\tau+\int_\frac{4}{3}^\frac{2x}{3h} \tau^{s-2+q} d\tau \big) \\
        &\lesssim h^{s+q}+\begin{cases}
            \frac{h^{s+q}}{1-(s+q)}, \qquad & q<1-s, \\
            h\ln(\frac{x}{h}), &q=1-s, \\
            \frac{hx^{s+q-1}}{s+q-1}, & q>1-s.
        \end{cases} =:J_q(h,x)
    \end{aligned}
        \end{equation}
    The integral on $(0,\frac{4}{3})$ converges since $|\tau\pm 1|^{s-1}$ is integrable on $(0,\tfrac{4}{3})$, while the integral on $(\frac{4}{3},\frac{2x}{3h})$ follows since  $|\delta_2(|\tau|^{s-1})|\leq 2^{5-2s}(1-s)\tau^{s-2}$ on $(\frac{4}{3},\infty)$ by the mean value theorem.

    Consider now $k<k^*$ and $\alpha=\alpha_k$ in \eqref{eq:bootstrapping_object_2}. Then \eqref{eq:bootstrapping_object_2} and \eqref{eq:I_1_int_auxilliary_computation} yield
    \begin{align*}
        (x-h)^nx^{1-s}|\delta_{2h}u^{(n)}(x)| &\lesssim h+C_n(\alpha_k)^\lambda x^{1-s-\lambda\alpha_k}J_{\lambda \alpha_k}(h,x).
    \end{align*}
    If $k+1<k^*$ we find by \eqref{eq:I_1_int_auxilliary_computation} (then, $\alpha_{k+1}=s+\lambda\alpha_k<1$ by the definition of $\lambda$ so we never encounter the logarithmic case) that
    \begin{align*}
        (x-h)^nx^{1-s}|\delta_{2h}u^{(n)}(x)| &\lesssim 
        h+C_n(\alpha_k)^\lambda x^{1-\alpha_{k+1}}h^{\alpha_{k+1}}\big(1+\tfrac{1}{1-\alpha_{k+1}})
    \end{align*}
    Dividing by $x^{1-\alpha_{k+1}}h^{\alpha_{k+1}}$ and using that $h=h^{\alpha_{k+1}}h^{1-\alpha_{k+1}}\lesssim h^{\alpha_{k+1}}x^{1-\alpha_{k+1}}$ yields
    \begin{align}\label{eq:C_m_recursion}
        C_n(\alpha_{k+1}) \lesssim 1 + C_n(\alpha_k)^\lambda\big(1+\tfrac{1}{1-\alpha_{k+1}}\big).
    \end{align}
    If $k+1=k^*$ we have $q+s=\alpha_{k^*}>1$ in  \eqref{eq:I_1_int_auxilliary_computation} and use $h^{\alpha_{k+1}}\lesssim hx^{\alpha_{k+1}-1}$ to get  
    \begin{equation}\label{eq:bootstrapping_final_step}
    \begin{aligned}
        (x-h)^nx^{1-s}|\delta_{2h}u^{(n)}(x)| &\lesssim 
                h+C_n(\alpha_k)^\lambda \big(x^{1-\alpha_{k^*}}h^{\alpha_{k^*}}+\tfrac{h}{\alpha_{k^*}-1}\big) \\
        & \lesssim h \Big(1+C_n(\alpha_k)^\lambda\big(1+\tfrac{1}{\alpha_{k^*}-1}\big)\Big) \lesssim h.
    \end{aligned}
        \end{equation}
Recalling that $C_n(0)<\infty$, repeated application of \eqref{eq:C_m_recursion} yields $C_n(\alpha_{k})<\infty$ for all $k=0,\dots,k^*-1$, whence \eqref{eq:bootstrapping_final_step} yields the claimed estimate. \nc

    \medskip\noindent \textbf{Base case $n=0$. } In this case the terms $D_1, D_2, I_b$ are absent. Observe that the above estimate for $I_3$ remains valid and  that the bootstrap argument applies unchanged with $C_0(\alpha)$\nc. All that remains is therefore to modify the $I_2,I_4,I_5$ estimates. We rewrite the integrals by applying the mean-value theorem twice, using that $K'$, $K'_{\text{reg}}$, and $K'_s$ are odd: 
    \begin{equation}\label{eq:integrals_base_case_formulation}
    \begin{aligned}
          I_2 
        &= x^{1-2s}2h\int_{\frac{5x}{3}}^{2\nu+x} u(y) K''_s(y+\eta_y)(2x+\xi_y+\overline{\xi}_y) dy, \\
        I_4 &= x^{1-2s}2h\int_{0}^{2\nu +x} u(y) K''_{\text{reg}}(y+\eta_y)(2x+\xi_y+\overline{\xi}_y) dy, \\
        I_5 &= x^{1-2s}2h\int_{2\nu +x}^\infty u(y) K''(y+\eta_y)(2x+\xi_y+\overline{\xi}_y) dy,   
    \end{aligned}
        \end{equation}
    where $\xi_y, \overline{\xi}_y\in(-h,h)$, $\eta_y\in(-x-\overline{\xi}_y, x+\xi_y)$ are different in each line. For $I_4$ we also split $\int_{-2\nu-x}^{2\nu+x}=\int_{-2\nu-x}^{0}+\int_{0}^{2\nu+x}$ and changed variables $y \mapsto -y$ in the first integral and used that $u$ is even. Consequently,
    \begin{equation}\label{eq:integral_bounds_base_case}
    \begin{aligned}
        |I_2|&\lesssim x^{2-2s}h\int_{\frac{5x}{3}}^{2\nu +x} y^s|K_s''(y-\tfrac{3x}{2})| dy \lesssim x^{2-2s}h\int_{\frac{5}{3}}^\infty \frac{(x\tau)^sx d\tau}{(\tau x-\tfrac{3x}{2})^{3-s}} \simeq h \\
        |I_4|&\lesssim x^{2-2s}h \|u\|_\infty \| K''_{\text{reg}}\|_\infty 3\nu \lesssim\nu^{2-2s}h, \\ 
        |I_5| &\lesssim x^{2-2s}h\|u \|_\infty\int_{2\nu}^\infty \sup_{|y-r|<\frac{3\nu}{2}} |K''(r)| dy \lesssim \nu^{2-2s}h.
    \end{aligned}
        \end{equation}
    For $I_2$ we use \ref{assump_1}, $\tau-\tfrac{3}{2}\geq \tfrac{\tau}{10}$, and that the integral converges since $3-2s>1$.
\end{proof}

\section{Computing the limit}\label{sec:comp_lim} Now we arrive at the proof of Main Theorem \ref{thm::u^n_limit}. We return to the central difference identity \eqref{eq::u^n_central_diff_equation}, but now divide by $2h$, and let $h\to0$ for fixed $x$. Then we let $x\to0$ to extract the leading-order asymptotics of $u^{(m+1)}$ at the cusp. The difference estimate from Section~\ref{sec:diff_est} is used throughout to justify the dominated convergence theorem. We also need the following Lemma:
\begin{lem}\label{lem:combinatorics_identity}
    Let $s\in (0,1)$. The following identity holds for $n\in \N$, with the convention $\frac{1}{\Gamma(2s-n)}\vert_{s=\frac{1}{2}}=0$:
    \begin{align*}
        \sum_{i=1}^n{n +1 \choose i}\Gamma(n-i+1-s)\Gamma(i-s) = 
        \left(\frac{\Gamma(2s)\Gamma(s-n)}{\Gamma(s)\Gamma(2s-n)}-1\right)2\Gamma(n+1-s)\Gamma(-s).
    \end{align*}
\end{lem}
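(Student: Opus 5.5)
The plan is to complete the sum to the full range $i=0,\dots,n+1$, evaluate it in closed form with the Chu--Vandermonde identity for rising factorials, and then rewrite the result with the reflection formula. All Gamma arguments on the left-hand side are of the form $j-s$ with $j\geq 0$ and $s\in(0,1)$, so none of them is a pole.

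\textbf{Step 1 (Pochhammer form).} Write $\Gamma(j-s)=\Gamma(-s)(-s)_j$ for $j\in\N\cup\{0\}$, where $(a)_j$ is the rising factorial. The summand becomes $\Gamma(-s)^2\binom{n+1}{i}(-s)_i(-s)_{n+1-i}$. Chu--Vandermonde, $\sum_{i=0}^{N}\binom{N}{i}(a)_i(b)_{N-i}=(a+b)_N$, with $N=n+1$ and $a=b=-s$, gives
\begin{align*}
\sum_{i=0}^{n+1}\binom{n+1}{i}\Gamma(n+1-i-s)\Gamma(i-s)=\Gamma(-s)^2(-2s)_{n+1}.
\end{align*}
The endpoint terms $i=0$ and $i=n+1$ each equal $\Gamma(n+1-s)\Gamma(-s)$. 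Subtracting them yields
\begin{align*}
\sum_{i=1}^{n}\binom{n+1}{i}\Gamma(n-i+1-s)\Gamma(i-s)=\Gamma(-s)^2(-2s)_{n+1}-2\Gamma(n+1-s)\Gamma(-s).
\end{align*}
The $-2\Gamma(n+1-s)\Gamma(-s)$ term already matches the right-hand side of the lemma. It therefore remains to show
\begin{align*}
\Gamma(-s)(-2s)_{n+1}=2\Gamma(n+1-s)\frac{\Gamma(2s)\Gamma(s-n)}{\Gamma(s)\Gamma(2s-n)}.
\end{align*}

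\textbf{Step 2 ($s\neq\tfrac12$).} Here $-2s\notin\Z$, so $(-2s)_{n+1}=\Gamma(n+1-2s)/\Gamma(-2s)$. I compute the two sides separately.
\begin{itemize}
\item \emph{Left side.} Apply $\Gamma(z)\Gamma(1-z)=\pi/\sin(\pi z)$ to $\Gamma(-s)$ and $\Gamma(-2s)$, and use $\Gamma(1+2s)/\Gamma(1+s)=2\Gamma(2s)/\Gamma(s)$. This gives
\begin{align*}
\frac{\Gamma(-s)}{\Gamma(-2s)}=\frac{\sin(2\pi s)}{\sin(\pi s)}\cdot\frac{2\Gamma(2s)}{\Gamma(s)}=4\cos(\pi s)\frac{\Gamma(2s)}{\Gamma(s)}.
\end{align*}
\item \emph{Right side.} The reflection formula gives $\Gamma(n+1-s)\Gamma(s-n)=\pi/\sin(\pi(s-n))$ and $1/\Gamma(2s-n)=\Gamma(n+1-2s)\sin(\pi(2s-n))/\pi$. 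Hence the right side equals
\begin{align*}
\frac{2\Gamma(2s)}{\Gamma(s)}\,\Gamma(n+1-2s)\,\frac{\sin(\pi(2s-n))}{\sin(\pi(s-n))}.
\end{align*}
Since $\sin(\pi(2s-n))/\sin(\pi(s-n))=\sin(2\pi s)/\sin(\pi s)=2\cos(\pi s)$, the factors $(-1)^n$ cancelling, this matches the left side.
\end{itemize}

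\textbf{Step 3 ($s=\tfrac12$).} Here $(-2s)_{n+1}=(-1)_{n+1}=0$ for $n\geq1$. On the right, the stated convention $1/\Gamma(2s-n)|_{s=1/2}=0$ makes the bracket equal to $-1$. Both sides then reduce to $-2\Gamma(n+1-s)\Gamma(-s)$, so the identity holds; this is also the continuous limit of the $s\neq\tfrac12$ formula.

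\textbf{Main obstacle.} There is no real analytic difficulty. The only delicate point is handling the degenerate parameter values in the Gamma and sine quotients correctly. For $s\in(0,1)\setminus\{\tfrac12\}$, none of $s-n$, $2s-n$, $-2s$ is a nonpositive integer, so every reflection step above is legitimate, and $s=\tfrac12$ is handled separately in Step 3.
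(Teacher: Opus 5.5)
Your proof is correct, but it takes a different route from the paper. The paper's proof is a one-line induction on $n$ using only Pascal's rule and $\Gamma(z)/(z-1)=\Gamma(z-1)$.

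Unpacked, that induction runs as follows. Write $A_n:=\sum_{i=0}^{n+1}\binom{n+1}{i}\Gamma(n+1-i-s)\Gamma(i-s)$ for your completed sum. Splitting $\binom{n+1}{i}=\binom{n}{i}+\binom{n}{i-1}$ and using $\Gamma(j+1-s)=(j-s)\Gamma(j-s)$ gives $A_n=(n-2s)A_{n-1}$. This is exactly the case $a=b=-s$ of the Chu--Vandermonde identity you quote. The quantity $R_n:=2\Gamma(n+1-s)\Gamma(-s)\Gamma(2s)\Gamma(s-n)/(\Gamma(s)\Gamma(2s-n))$ satisfies the same recursion $R_n=(n-2s)R_{n-1}$ by the functional equation alone, using $1/\Gamma(2s-n)=(2s-n)/\Gamma(2s-n+1)$. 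Finally $A_0=R_0=2\Gamma(1-s)\Gamma(-s)$.

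So the paper matches the two sides through a common recursion. You instead evaluate the full sum in closed form and identify $\Gamma(-s)(-2s)_{n+1}$ with the Gamma quotient via the reflection formula. That forces you to split off $s=\tfrac12$, where $\Gamma(-2s)$ has a pole.

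Each route has something to offer:
\begin{itemize}
\item Your version exhibits the explicit closed form $\Gamma(-s)^2(-2s)_{n+1}$. It also makes the degeneracy at $s=\tfrac12$ visible as $(-1)_{n+1}=0$.
\item The inductive version is more elementary and uniform in $s$. The degenerate case appears only as the vanishing factor $n-2s$ at $n=1$, provided $1/\Gamma$ is read as an entire function, which is what the paper's convention amounts to.
\end{itemize}

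Your Step 2 could be shortened the same way. Both sides of your reduced identity pick up the factor $n-2s$ when passing from $n-1$ to $n$, and both equal $2\Gamma(1-s)$ at $n=0$. So the trigonometric computation is not needed.
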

\begin{proof} This follows from induction, Pascal's rule, and $\tfrac{\Gamma(z)}{z-1}=\Gamma(z-1)$.
\end{proof}
\noindent Tedious calculations requiring the use of hypergeometric functions are necessary in the proof of Main Theorem \ref{thm::u^n_limit}; these have been relegated to Appendix \ref{appendix_hypergeom}. Note that $m=k$ in \eqref{eq::u^n_central_diff_equation} will give us $u_*^{(k+1)}$, so the base case $n=0$ in the proof is the proof of $u_*'$, not $u_*$.

\begin{proof}[Proof of Main Theorem \ref{thm::u^n_limit}] The proof for the existence and value of $u_*$ is given in Appendix \ref{appendix_u_lim}, and will be used below. We first consider $n\geq 1$ and show the base case $n=0$ afterwards. 

\medskip
\noindent\textbf{Inductive step $n\geq 1$. } Assume that the claim holds for $m \in \{0,\dots, n \}$, $n\geq1$. This implies that \ref{assump_1} is true for $m\in \{0,\dots, n\}$, and consequently Lemma \ref{lem::u^(n)_limit_first_estimate} holds for $m\in \{0,\dots, n\}$. We now prove that the claim also holds for $m=n+1$. We will use \eqref{eq::u^n_central_diff_equation}. To ease the notation, let in the following 
    $$D:=\frac{C}{2}\frac{\Gamma(s)^2}{\Gamma(2s)}.$$
    
    $\bullet$ \textbf{$D_0$:} For the left-hand side of \eqref{eq::u^n_central_diff_equation}, $u_*$ (recall, this limit is known) yields 
    \begin{align*}
        \frac{D_0}{2h} 
        &\xrightarrow[]{h\rightarrow0}2\frac{u(x)}{x^s}\frac{u^{(n+1)}(x)}{{x^{s-(n+1)}}} \xrightarrow[]{x\rightarrow 0} 2 u_* \lim_{x\rightarrow 0}\frac{u^{(n+1)}(x)}{{x^{s-(n+1)}}}=2D u_*^{(n+1)}. 
    \end{align*}

    $\bullet$ \textbf{$D_1, D_2$:} By the inductive hypothesis,
    \begin{align*}
        \frac{D_1}{2h}
        &\xrightarrow[]{h \rightarrow 0}-\sum_{i=1}^n {n \choose i}\frac{u^{(i)}(x)}{x^{s-i}}\frac{ u^{(n-i+1)}(x)}{x^{s-(n-i+1)}} \xrightarrow[]{x\rightarrow 0}-\sum_{i=1}^n {n \choose i}u^{(i)}_*u^{(n-i+1)}_*.
    \end{align*}
    Similarly, but also by changing variables $i \mapsto i+1$ and Pascal's rule, 
\begin{align*}
    \lim_{x\rightarrow 0} \lim_{h\rightarrow 0}\frac{D_2}{2h} 
    &=-\sum_{i=1}^{n} {n+1 \choose i}u^{(n-i+1)}_*u^{(i)}_* +\sum_{i=1}^{n} {n \choose i}u^{(n-i+1)}_*u^{(i)}_*.
\end{align*}
Thus we have that 
\begin{align*}
    &\lim_{x \rightarrow 0} \lim_{h \rightarrow 0} \left(\frac{D_1}{2h}+\frac{D_2}{2h} \right) 
     = \frac{(-1)^{n}D^2}{\Gamma(-s)^2}\sum_{i=1}^{n}{n+1 \choose i} \Gamma(n-i+1-s)\Gamma(i-s).
\end{align*}

    $\bullet$ \textbf{$I_b$:} By the inductive hypothesis and \eqref{eq:K_s_deriv}, 
    \begin{align*}
        &\frac{I_b}{2h} 
                \xrightarrow[]{h\rightarrow 0} \sum_{i=0}^{n-1} \frac{K^{(i+1)}_{\textnormal{s}}\big(\tfrac{2x}{3}\big)}{x^{s-(i+2)}}\left((-1)^{i}\frac{u^{(n-1-i)} \big(\tfrac{5x}{3}\big)}{\left(\frac{3}{5}\cdot \frac{5x}{3}\right)^{s-(n-1-i)}}+\frac{u^{(n-1-i)} \big(\tfrac{x}{3}\big)}{\left(3\cdot \frac{x}{3}\right)^{s-(n-1-i)}} \right) \\
        &\xrightarrow[]{x \rightarrow 0}
        C\sum_{i=0}^{n-1} (-1)^{i+1}\frac{3^{n+1-2s}}{2^{i+2-s}}\frac{\Gamma(i+2-s)}{\Gamma(1-s)}\left((-1)^{i}5^{s-(n-1-i)}+ 1 \right)u^{(n-1-i)}_*.
    \end{align*}
    
    $\bullet$ \textbf{$I_4, I_5$:} Since $n\geq 1$ we have that
    \begin{align*}
        \frac{I_4}{2h} &=x^{1-2s}(x-h)^n\int_{-2\nu-x}^{2\nu+x} u(y) \frac{\delta_{2h}K^{(n)}_{\text{reg}}(x-y)}{2h} dy \\
        &\xrightarrow[]{h\rightarrow 0}x^{n+1-2s}\int_{-2\nu-x}^{2\nu+x} u(y)K^{(n+1)}_{\text{reg}}(x-y) dy \xrightarrow[]{x\rightarrow 0}0
    \end{align*}
    by the dominated convergence theorem, as the mean-value theorem yields
    \begin{align*}
         \left|u(y) \frac{\delta_{2h}K^{(n)}_{\text{reg}}(x-y)}{2h}\right| \leq \|u \|_\infty \| K_{\text{reg}}^{(n+1)}\|_\infty.
    \end{align*}
    This is independent of $h$ and $x$ and integrable for $s\in(0,1)$ on $(-3\nu, 3\nu)$. We also have 
    \begin{align*}
        \frac{I_5}{2h}&=x^{1-2s}(x-h)^n\int_{2\nu+x}^{\infty} u(y) \left( \frac{\delta_{2h}K^{(n)}(x+y)}{2h}+\frac{\delta_{2h}K^{(n)}(x-y)}{2h} \right) dy \\
        &\xrightarrow[]{h\rightarrow 0}x^{n+1-2s}\int_{2\nu+x}^{\infty} u(y) \left( K^{(n+1)}(x+y)+K^{(n+1)}(x-y) \right) dy \xrightarrow[]{x\rightarrow 0}0
    \end{align*}
    by the dominated convergence theorem, since the mean-value theorem yields
    \begin{align*}
        & \left| u(y) \left( \frac{\delta_{2h}K^{(n)}(x+y)}{2h}+\frac{\delta_{2h}K^{(n)}(x-y)}{2h} \right) \right| 
         \leq \|u\|_\infty2 \sup_{|y-r|<\frac{3\nu}{2}}\left|K^{(n+1)}(r)\right|.
    \end{align*}
    This is independent of $h$ and $x$ and integrable on $(2\nu, \infty)$ by the exponential decay.

    $\bullet$ \textbf{$I_2, I_3$:} By $u_*$ and \eqref{eq:K_s_deriv},
    \begin{align*}
        &\frac{I_2}{2h} = 
        x^{1-2s}(x-h)^n\int_{\frac{5}{3}}^{\frac{2\nu}{x}+1}u(\tau x)\left( \frac{\delta_{2h}K^{(n)}_{\textnormal{s}}(x+\tau x)}{2h}+\frac{\delta_{2h}K^{(n)}_{\textnormal{s}}(x-\tau x)}{2h}\right)  x  d\tau \\
        &\xrightarrow[]{h\rightarrow 0} 
        C\tfrac{\Gamma(n+2-s)}{\Gamma(1-s)} \int_{\frac{5}{3}}^{\frac{2\nu}{x}+1}\frac{ u(\tau x)}{(\tau x)^s} \tau^s\left((1+\tau)^{s-(n+2)}(-1)^{n+1} +(\tau-1)^{s-(n+2)} \right)  d\tau \\
        &\xrightarrow[]{x \rightarrow 0} C\tfrac{\Gamma(n+2-s)}{\Gamma(1-s)} u_* \int_{\frac{5}{3}}^{\infty} \tau^s\left((-1)^{n+1} (1+\tau)^{s-(n+2)}+(\tau-1)^{s-(n+2)} \right)  d\tau.
    \end{align*}
    Here the application of the dominated convergence theorem holds by the following.  By the mean-value theorem, $u_*$, \eqref{eq:K_s_deriv}, and since $(\tau-1-\frac{h}{x})\geq \frac{\tau}{10}$,
    \begin{align*}
          &\left| x^{1-2s}(x-h)^n u(\tau x)\left( \frac{\delta_{2h}K^{(n)}_{\textnormal{s}}(x+\tau x)}{2h}+\frac{\delta_{2h}K^{(n)}_{\textnormal{s}}(x-\tau x)}{2h}\right)x \right| \\
                    &\leq x^{2-s+n}\frac{u(\tau x)}{(\tau x)^s} \tau^s 2|K_s^{(n+1)}(\tau x- x-h)|  \lesssim x^{2-s+n} \frac{\tau^s }{(\tau x-x-h)^{n+2-s}} \\ 
                    &=\frac{\tau^s}{(\tau-1-\frac{h}{x})^{n+2-s}}\lesssim\frac{\tau^s}{\tau^{n+2-s}}= \frac{1}{\tau^{n+2-2s}},
        \end{align*}
    which is independent of $h$ and $x$ and integrable on $(\frac{5}{3},\infty)$ for $s\in(0,1)$ since $n\geq 1$. 
    
Similarly, \eqref{eq:K_s_deriv} and $u_*$ yield
 \begin{align*}
        \frac{I_3}{2h} &= x^{1-2s}(x-h)^n\int_{-\frac{1}{3}}^{\frac{5}{3}} u(\tau x) \frac{\delta_{2h}K^{(n)}_{\textnormal{s}}(x+\tau x)}{2h} x \textnormal{ d}\tau \\
        &\xrightarrow[]{h\rightarrow 0} 
        C\frac{\Gamma(n+2-s)}{\Gamma(1-s)}\int_{-\frac{1}{3}}^{\frac{5}{3}}\frac{ u(\tau x)}{|\tau x|^s} |\tau|^s (1+\tau)^{s-(n+2)}(-1)^{n+1}  \textnormal{ d}\tau \\
        &\xrightarrow[]{x \rightarrow 0} C(-1)^{n+1}\frac{\Gamma(n+2-s)}{\Gamma(1-s)} u_* \int_{-\frac{1}{3}}^{\frac{5}{3}}|\tau|^s  (1+\tau)^{s-(n+2)}  \textnormal{ d}\tau,
    \end{align*}
    since, by the mean-value theorem, \eqref{eq:K_s_deriv} and \ref{assump_1}, 
    \begin{align*}
        &\left| x^{1-2s}(x-h)^n u(\tau x)  \frac{\delta_{2h}K^{(n)}_{\textnormal{s}}(x+\tau x)}{2h} x\right|   \leq   \left| x^{1-2s+n+1} u(\tau x) K^{(n+1)}_{\textnormal{s}}(\tau x+x-h) \right| \\
    &\qquad \lesssim \left| \frac{u(\tau x)}{|\tau x|^s}|\tau|^s (\tau +\tfrac12)^{s-(n+2)} \right|  \lesssim |\tau|^s(\tau+\tfrac12)^{s-(n+2)}.
    \end{align*}
    This is independent of $h$ and $x$ and integrable on $(-\frac{1}{3}, \frac{5}{3})$.

     $\bullet$ \textbf{$I_1$:} 
    To use the dominated convergence theorem we must split the integral as
    \begin{align*}
        \frac{I_1}{2h}&=-x^{1-2s}(x-h)^n \left( \int_{0}^{\frac{4h}{3}}+\int_{\frac{4h}{3}}^{\frac{2x}{3}} \right) \frac{\delta_{2h}K_{s}(y)}{2h}\delta_{2y}u^{(n)}(x)\textnormal{ d}y =: \frac{I_A}{2h}+\frac{I_B}{2h}.
    \end{align*}
     This is because $K_s(y-h)$ is not continuous at $y=h$, which prevents us from using the mean value theorem directly on $\delta_{2h}K_s(y)$ on $(0,\frac{2x}{3})$. By \eqref{eq:delta_K_s_identity} we write 
\begin{align*}
     \frac{I_A}{2h}= -x^{1-2s}(x-h)^n\int_0^{\frac{4}{3}} \tfrac{C}{2} h^{s-1} \delta_2(|\tau|^{s-1}) \delta_{2(\tau h)}u^{(n)}(x) d\tau.
\end{align*}
 By Lemma \ref{lem::u^(n)_limit_first_estimate}, 
\begin{align*}
    &\left|-x^{1-2s}(x-h)^n \tfrac{C}{2} h^{s-1} \delta_2(|\tau|^{s-1}) \delta_{2(\tau h)}u^{(n)}(x) \right|\lesssim  \frac{x^{1-2s+n} h^{s-1}|\delta_2(|\tau|^{s-1})|\tau h}{(x-\tau h)^n x^{1-s}} \\
    &\leq \frac{x^nh^{s}}{x^{s}(x-\frac{2}{3}x)^n}|\delta_2(|\tau|^{s-1})|\tau \simeq \frac{h^{s}}{x^s}|\delta_2(|\tau|^{s-1})|\tau \lesssim |\delta_2(|\tau|^{s-1})|\tau,
\end{align*}
which is independent of $h$ and $x$ and integrable on $(0,\frac{4}{3})$ for $s\in(0,1)$. Thus the dominated convergence theorem and Lemma \ref{lem::u^(n)_limit_first_estimate} yield
\begin{align*}
\left| \frac{I_A}{2h} \right| \lesssim \frac{h^{s}}{x^s}\int_0^{\frac{4}{3}}|\delta_2(|\tau|^{s-1})|\tau d\tau \xrightarrow[]{h\rightarrow0}0 \quad \implies \quad \lim_{x\rightarrow 0} \lim_{h\rightarrow 0}\frac{I_A}{2h}=\lim_{x\rightarrow 0}0=0.
\end{align*}

For the second integral, by \eqref{eq:K_s_deriv} and the inductive hypothesis for $m=n$,
\begin{align*}
    &\frac{I_B}{2h}= -x^{1-2s}(x-h)^n \int_{\frac{4h}{3x}}^{\frac{2}{3}}  \frac{\delta_{2h}K_{s}(\tau x)}{2h}\delta_{2(\tau x)}u^{(n)}(x) x \textnormal{ d}\tau \\
    & \xrightarrow[]{h\rightarrow 0} 
    C\tfrac{\Gamma(2-s)}{\Gamma(1-s)} \int_{0}^{\frac{2}{3}}  \tau^{s-2} \left((1+\tau)^{s-n} \frac{u^{(n)}(x+\tau x)}{(x+\tau x)^{s-n}}-(1-\tau)^{s-n} \frac{u^{(n)}(x-\tau x)}{(x-\tau x)^{s-n}} \right)  \textnormal{ d}\tau \\
    &\xrightarrow[]{x \rightarrow 0} C(1-s) u^{(n)}_*\int_{0}^{\frac{2}{3}}  \tau^{s-2} \left((1+\tau)^{s-n}-(1-\tau)^{s-n} \right)  \textnormal{ d}\tau,
\end{align*}
since by the mean-value theorem, Lemma \ref{lem::u^(n)_limit_first_estimate}, \eqref{eq:K_s_deriv}, and that $(\tau-\frac{h}{x})\geq \frac{\tau}{4}$, 
\begin{align*}
    &\left|-x^{1-2s}(x-h)^n \frac{\delta_{2h}K_{s}(\tau x)}{2h}\delta_{2(\tau x)}u^{(n)}(x) x \right| \lesssim  \frac{x^{2-2s}(x-h)^n |K'_s(\tau x -h)|\tau x}{(x-\tau x)^nx^{1-s}} \\
    &\leq \frac{x^{2-s }x^n}{(x-\frac{2}{3}x)^n}\frac{\tau}{(\tau x-h)^{2-s}} \simeq \frac{\tau}{(\tau - \tfrac{h}{x})^{2-s}} \lesssim \frac{\tau}{\tau^{2-s}} \simeq \tau^{s-1}.
\end{align*}
This is independent of $h$ and $x$ and is integrable on $(0,\frac{2}{3})$ for $s\in(0,1)$, so the dominated convergence theorem applies. \nc

$\bullet$ \textbf{Combining the limits.} Combining the above limits of the integral terms yields an expression consisting of explicit integrals and finite sums:
\begin{equation*}
    \begin{aligned}
         I^n_*:&=\lim_{x\rightarrow 0}\lim_{h \rightarrow 0} \frac{I_1+I_2+I_3+I_4+I_5+I_b}{2h} \\
    &= C(1-s) u^{(n)}_*\int_{0}^{\frac{2}{3}}  \tau^{s-2} \left((1+\tau)^{s-n}-(1-\tau)^{s-n} \right)  \textnormal{ d}\tau \\
    &\quad + C\frac{\Gamma(n+2-s)}{\Gamma(1-s)} u_* \int_{\frac{5}{3}}^{\infty} \tau^s\left((-1)^{n+1} (1+\tau)^{s-(n+2)}+(\tau-1)^{s-(n+2)} \right) d\tau \\
    &\quad + C(-1)^{n+1}\frac{\Gamma(n+2-s)}{\Gamma(1-s)} u_* \int_{-\frac{1}{3}}^{\frac{5}{3}}|\tau|^s  (1+\tau)^{s-(n+2)}  \textnormal{ d}\tau \\
    &\quad+C\sum_{i=0}^{n-1} (-1)^{i+1}\frac{3^{n+1-2s}}{2^{i+2-s}}\frac{\Gamma(i+2-s)}{\Gamma(1-s)}\left((-1)^{i}5^{s-(n-1-i)}+ 1 \right)u^{(n-1-i)}_*.
    \end{aligned}
\end{equation*}
After integrating by parts so each integral contains a factor $\tau^{s-1}$ (which is indeed allowed as all objects involved are convergent), using the inductive hypothesis and absorbing the boundary terms into the sum, this becomes
\begin{equation}\label{eq:big_computation}
\begin{aligned}
    I^n_*&=(-1)^{n+1}CD\frac{\Gamma(n+1-s)}{\Gamma(-s)}\Bigg[\int_{0}^{\frac{2}{3}}\frac{\tau^{s-1}}{(1-\tau)^{n+1-s}} d\tau+\int_{0}^{\frac{1}{3}}\frac{\tau^{s-1}}{(1-\tau)^{n+1-s}} d\tau  \\
    &\quad -3^{n+1-2s}\sum_{i=0}^{n-1} \frac{1}{2^{i+1-s}}\frac{\Gamma(i+1-s)\Gamma(n-i-s)}{\Gamma(1-s)\Gamma(n+1-s)}\\
    &\quad -\int_{\frac{2}{3}}^{\infty}\tau^{s-1}(1+\tau)^{s-1-n} d\tau+(-1)^n\int_{\frac{5}{3}}^{\infty}\tau^{s-1}(\tau-1)^{s-1-n} d\tau  \\
    &\quad -3^{n+1-2s}\sum_{i=0}^{n-1}(-1)^{i+1} \frac{5^{s-(n-i)}}{2^{i+1-s}}\frac{\Gamma(i+1-s)\Gamma(n-i-s)}{\Gamma(1-s)\Gamma(n+1-s)}\Bigg].
\end{aligned}
\end{equation}
This expression can be simplified further. By changing variables $\tau \mapsto (\tau-1)$ and integrating by parts $n$ times we find that 
\begin{align*}
    \int_{\frac{2}{3}}^\infty \frac{-\tau^{s-1} d\tau}{(1+\tau)^{n+1-s}} =\int_{\frac{5}{3}}^{\infty}\frac{(-1)^{n+1} \tau^{s-1} d\tau}{(\tau-1)^{n+1-s}}  +\sum_{i=0}^{n-1} (-1)^{i+1}\frac{(\frac 53)^{s-n+i}}{(\frac 23)^{i+1-s}} \tfrac{\Gamma(i+1-s)\Gamma(n-i-s)}{\Gamma(1-s)\Gamma(n+1-s)}.
\end{align*}
Thus the two final lines of \eqref{eq:big_computation} cancel (we have also used Euler's reflection formula). This, together with Proposition \ref{appendix_prop_1} and the difference term limits then yield
\begin{align*}
&u_*^{(n+1)}= \frac{1}{2D}\lim_{x\rightarrow 0}\lim_{h \rightarrow 0} \frac{I_1+I_2+I_3+I_4+I_5+I_b+D_1+D_2}{2h} \\
     &=\frac{D(-1)^{n+1}\Gamma(n+1-s)}{\Gamma(-s)}\bigg(\frac{\Gamma(2s)}{\Gamma(s)} \frac{\Gamma(s-n)}{\Gamma(2s-n)} -\frac{\sum_{i=1}^{n}{n+1 \choose i} \Gamma(n-i+1-s)\Gamma(i-s)}{2\Gamma(n+1-s)\Gamma(-s)}\bigg)\\
   &=D(-1)^{n+1}\frac{\Gamma(n+1-s)}{\Gamma(-s)}.
\end{align*}
The final equality holds by Lemma \ref{lem:combinatorics_identity}.

\medskip
\noindent \textbf{Base case $n=0$. } In this case we do not have  $D_1,D_2, I_b$, and the $D_0$, $I_1$ and $I_3$ estimates above still work for $n=0$ without modification. $I_2$ yields the same limit as above, but for the dominated convergence bound we must now use that $K_s'$ is odd and the mean-value theorem twice:
\begin{align*}
    &\left| x^{2-2s} u(\tau x)\left( \frac{\delta_{2h}K_{s}(x+\tau x)}{2h}+\frac{\delta_{2h}K_{s}(x-\tau x)}{2h}\right) \right| \\
                    &\lesssim \tfrac{u(\tau x)}{(\tau x)^s} \tfrac{\tau^s}{x^{s-2}}|K_s'(\tau x+ x+\xi_y)-K_s'(\tau x-x-\overline\xi_y)| \lesssim \tfrac{\tau^s}{x^{s-3}}|K''_s(\tau x-\tfrac{3}{2}x)| 
                    \lesssim \frac{1}{\tau^{3-2s}},
\end{align*}
by the existence of $u_*$ and since $(\tau-\frac{3}{2}) \geq \frac{1}{10}\tau$, which is integrable on $(\frac{5}{3},\infty)$ for $s\in(0,1)$. Finally, for $I_4$ and $I_5$ we use the formulations in \eqref{eq:integrals_base_case_formulation}. We find that
\begin{align*}
    \frac{I_5}{2h}&\xrightarrow{h\rightarrow 0} 2x^{2-2s}\int_{2\nu +x}^\infty u(y)K''(y+\eta_y) dy \xrightarrow{x\rightarrow 0} 0,\\ 
    \frac{I_4}{2h}&\xrightarrow{h \rightarrow 0} 2x^{2-2s}\int_0^{2\nu +x}u(y)K''_{\text{reg}}(y+\eta_y)dy \xrightarrow{x\rightarrow 0} 0,
\end{align*}
by the dominated convergence theorem, since  computations similar to \eqref{eq:integral_bounds_base_case} yield integrable bounds independent of $h$ and $x$. Combining the limits and changing variables similarly to in the inductive step above yields
\begin{align*}
    &\lim_{x\to 0}\lim_{h\to 0}\frac{I_1+I_2+I_3+I_4+I_5}{2h}
        =CDs\bigg( \Big(\int_0^{\frac{2}{3}}+\int_0^{\frac{1}{3}}\Big)\tau^{s-1}(1-\tau)^{s-1}d\tau\\
    &\qquad+\int_{\frac{5}{3}}^{\infty}\tau^{s-1}((\tau-1)^{s-1}-(1+\tau)^{s-1})d\tau-\int_{\frac{2}{3}}^{\frac{5}{3}}\tau^{s-1}(1+\tau)^{s-1}d\tau \bigg).
\end{align*}
By a change of variables we find that
\begin{align*}
    &\int_{\frac{5}{3}}^{\infty}\tau^{s-1}((\tau-1)^{s-1}-(\tau+1)^{s-1})d\tau \\ 
    &= \lim_{R\rightarrow \infty}\bigg[\int_{\frac{2}{3}}^{R-1} (\tau+1)^{s-1}\tau^{s-1}d\tau-\int_{\frac{5}{3}}^R \tau^{s-1}(\tau+1)^{s-1}d\tau \bigg] \\
    &= \lim_{R\rightarrow \infty}\bigg[\int_{\frac{2}{3}}^{\frac{5}{3}} (\tau+1)^{s-1}\tau^{s-1}d\tau-\int_{R-1}^R \tau^{s-1}(\tau+1)^{s-1}d\tau \bigg]=\int_{\frac{2}{3}}^{\frac{5}{3}} (\tau+1)^{s-1}\tau^{s-1}d\tau,
\end{align*}
since the $\int_{R-1}^R$-integral vanishes. 
Consequently, by Proposition \ref{appendix_prop_1},
\begin{align*}
    u_*'=\frac{1}{2D}\lim_{x\to 0}\lim_{h\to 0}\frac{I_1+I_2+I_3+I_4+I_5}{2h} = \frac{C}{2}\frac{\Gamma(s)^2}{\Gamma(2s)}s=\frac{C}{2}\frac{\Gamma(s)^2}{\Gamma(2s)}(-1)\frac{\Gamma(1-s)}{\Gamma(-s)}.
\end{align*}
\end{proof}

\appendix
\section{}\label{appendix_u_lim}
We now prove Main Theorem \ref{thm::u^n_limit} in the case $n=0$. First, however, we establish some useful identities. Let $\delta_x^2f=f(\cdot +x)-2f(\cdot)+f(\cdot-x)$ denote second order differences. We also introduce 
\begin{align*}
    \Phi_s(\tau) := \delta_1^2(|\tau|^{s-1}),\quad P_s(t):=\int_0^t\Phi_s(\tau)\,d\tau, \quad Q_s(t):=\int_0^t\Phi_s(\tau)\tau^s\,d\tau.
\end{align*}
We write $P_s(\infty)$ and $Q_s(\infty)$ for the corresponding limits as $t\to\infty$.  By \ref{assump_K},
    \begin{align}\label{eq:delta2_K_identity}
        \delta_{x}^2K(\tau x)=\delta_{x}^2K_s(\tau x)+\delta_{x}^2K_{\text{reg}}(\tau x) = \frac{C}{x^{1-s}}\Phi_s(\tau)+\delta_{x}^2K_{\text{reg}}(\tau x), 
    \end{align}
    and the mean value theorem for integrals yields
    \begin{align}\label{eq:K_reg_second_difference}
        \delta_{x}^2K_{\text{reg}}(y) = \int_0^x\int_0^xK''_{\text{reg}}(y+t_1-t_2)dt_1 dt_2 = x^2 K''_{\text{reg}}(y+\xi_y),
    \end{align}
    for a $\xi_y\in(-x,x)$ depending on $y$. The proof of Main Theorem \ref{thm::u^n_limit} in the case $n=0$ relies heavily on the properties of $\Phi_s(\tau)$, so we present the following Lemma:

    \begin{lem}\label{lem:second_diff_properties}
Let $s\in(0,1)$. Then $\Phi_s$ is increasing on $(0,1)$, has a
unique root
    $\tau_s^*\in\left(\frac12,0.72\right),$ and satisfies
\[
    \Phi_s(\tau)<0
    \quad\text{for }0<\tau<\tau_s^*,
    \qquad
    \Phi_s(\tau)>0
    \quad\text{for }\tau>\tau_s^*.
\]
Moreover,
\begin{align}
    \label{eq:integral_identities_appendix_lemma}
        P_s(\infty)=0,
    \qquad
    Q_s(\infty)=\frac{B(s,s)}2=:\beta_s,
\end{align}
and for $0\leq t<1$,
\begin{align*}
    P_s(t)
    =
    \frac{(1+t)^s-(1-t)^s-2t^s}{s}, \qquad
    Q_s(t)
    =    -\frac{t^{2s}}{s}
    +\frac{2t^{s+1}}{s+1}
    +S_{s,\infty}(t),
\end{align*}
where
\begin{align*}
    S_{s,N}(t):=2\sum_{k=1}^{N}
    \frac{\Gamma(s)}
    {\Gamma(2k+1)\Gamma(s-2k)}
    \frac{t^{2k+s+1}}{2k+s+1}\geq 0, \qquad \text{for} \ N\in\N \cup \{\infty\}.
\end{align*}
In particular,
\begin{align}\label{eq:b_s_explicit}
    b_s
    &:=
    -Q_s(\tau_s^*) =
    \frac{(\tau_s^*)^{2s}}{s}
    -\frac{2(\tau_s^*)^{s+1}}{s+1}
    -S_{s,\infty}(\tau^*_s)
    >0.
\end{align}
\end{lem}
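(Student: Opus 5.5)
Throughout, $\Phi_s(\tau)=|\tau+1|^{s-1}-2|\tau|^{s-1}+|\tau-1|^{s-1}$. The plan is to treat the sign structure, the integral identities, and the explicit formulas separately.

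\textbf{Monotonicity and sign.} On $(0,1)$ we have $\Phi_s(\tau)=(1+\tau)^{s-1}-2\tau^{s-1}+(1-\tau)^{s-1}$. Differentiating gives
\[
\Phi_s'(\tau)=(s-1)\big[(1+\tau)^{s-2}-2\tau^{s-2}-(1-\tau)^{s-2}\big].
\]
Because $2\tau^{s-2}+(1-\tau)^{s-2}>(1+\tau)^{s-2}$, the bracket is negative, so $\Phi_s'>0$ and $\Phi_s$ is increasing on $(0,1)$. At the endpoints, $\Phi_s\to-\infty$ as $\tau\to0^+$ and $\Phi_s\to+\infty$ as $\tau\to1^-$. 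This gives exactly one root $\tau_s^*\in(0,1)$.

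To locate the root we need $\Phi_s(\tfrac12)<0$ and $\Phi_s(0.72)>0$ for all $s\in(0,1)$.
\begin{itemize}
\item At $\tau=\tfrac12$: $\Phi_s(\tfrac12)=(3/2)^{s-1}-2^{2-s}+2^{1-s}=(3/2)^{s-1}-2^{1-s}<0$, since $(3/2)^{s-1}<1<2^{1-s}$.
\item At $\tau=0.72$: this is a one-variable inequality in $s$. I would first check the endpoints $s\to0,1$, and then verify it uniformly either by monotonicity arguments or by the interval-arithmetic subdivision already used in the paper.
\end{itemize}
For $\tau\geq1$, $\Phi_s$ is the second difference of the strictly convex function $|\cdot|^{s-1}$ on $(0,\infty)$. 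Hence $\Phi_s>0$ for $\tau>1$, and $\Phi_s(1)=+\infty$ in the limiting sense. This completes the sign claim.

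\textbf{Formulas for $P_s$ and $P_s(\infty)$.} For $0\le t<1$, integrate term by term:
\[
P_s(t)=\frac{(1+t)^s-1}{s}-\frac{2t^s}{s}-\frac{(1-t)^s-1}{s},
\]
which simplifies to the stated expression. The antiderivative $\big(|1+\tau|^s-2|\tau|^s+\operatorname{sign}(\tau-1)|\tau-1|^s\big)/s$ is continuous across $\tau=1$ because the $s$-power vanishes there. Taylor expanding shows that it is $O(\tau^{s-2})$ times $\tau$, so it tends to $0$ as $\tau\to\infty$. At $\tau=0$ the antiderivative is $(1-2\cdot0+(-1))/s=0$. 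Hence $P_s(\infty)=0$.

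\textbf{Formula for $Q_s$.} For $t<1$, expand
\[
(1+\tau)^{s-1}+(1-\tau)^{s-1}=2\sum_{k\ge0}\binom{s-1}{2k}\tau^{2k}.
\]
Multiplying by $\tau^s$ and integrating gives $2\tfrac{t^{s+1}}{s+1}$ from $k=0$, the $-2\tau^{2s-1}$ term gives $-\tfrac{t^{2s}}{s}$, and the $k\ge1$ terms give $S_{s,\infty}$. The coefficient identity needed is
\[
\binom{s-1}{2k}=\frac{\Gamma(s)}{\Gamma(2k+1)\Gamma(s-2k)}.
\]
This coefficient is positive, since $\Gamma(s-2k)$ has sign $(-1)^{2k}=1$. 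So $S_{s,N}\ge0$. The series converges for $t<1$.

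\textbf{The value $Q_s(\infty)=\beta_s$.} This is the main obstacle. I would use a Mellin-type approach: write $Q_s(\infty)=\lim_{\epsilon\to0}\int_0^\infty\Phi_s(\tau)\tau^{s}\,d\tau$ via analytic continuation in the exponent. For the continuation, use
\[
\int_0^\infty|\tau\pm1|^{s-1}\tau^{a}\,d\tau
\]
expressed through Beta functions, namely $B(a+1,1-s-a-1)$ and $B(a+1,s)$ plus the reflection term, and take $a=s$ after cancellation of the poles. The $-2|\tau|^{s-1}$ part contributes zero in the regularized sense.

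Alternatively, and more robustly, I would compute $\lim_{t\to1^-}Q_s(t)$ from the series, then compute the integral over $(1,\infty)$ directly using the substitution $\tau=1/\sigma$. The total should reduce to $\tfrac12B(s,s)$ via the Beta integral $\int_0^1\sigma^{s-1}(1-\sigma)^{s-1}\,d\sigma$.

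\textbf{Positivity of $b_s$.} Since $\Phi_s<0$ on $(0,\tau_s^*)$, we get $Q_s(\tau_s^*)<0$, so $b_s>0$. The explicit expression for $b_s$ is then the $Q_s$ formula evaluated at $t=\tau_s^*<1$.
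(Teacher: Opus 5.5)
Most of your argument is correct, and in several places more explicit than the paper, which defers monotonicity, the sign structure and the identities for $P_s(\infty)$, $Q_s(\infty)$ to \cite[Lemma 5.1]{ehrnstrom_2023_onThePreciseCuspedBehaviour}. Your formula for $\Phi_s'$, the convexity argument on $(1,\infty)$, the antiderivative for $P_s$, the binomial-series derivation of $Q_s$ and $S_{s,N}\geq0$, and $b_s>0$ are all fine. Your identity $\Phi_s(\tfrac12)=(3/2)^{s-1}-2^{1-s}<0$ is also a cleaner route to $\tau_s^*>\tfrac12$ than the paper's. The paper instead bounds $\partial_s^2\Phi_s(0.5)$ and $\partial_s\Phi_s(0.5)$ at $s=1$, then compares with $\Phi_1(0.5)=0$.

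The genuine gap is the upper bound $\tau_s^*<0.72$, which you leave unproved, and the plan you sketch would not close it. Since $\Phi_1\equiv0$, we have $\Phi_s(0.72)\to0$ as $s\to1$. So an endpoint check says nothing near $s=1$, and interval arithmetic applied to $\Phi_s(0.72)$ itself cannot certify strict positivity of a quantity tending to zero. The paper's interval computations avoid this by dividing by $1-s$. In any case they only cover $s\in[0.35,1]$, whereas the lemma is stated for all $s\in(0,1)$. The paper's argument is that $s\mapsto\Phi_s(0.72)$ is convex with $\partial_s\Phi_s(0.72)|_{s=1}<0$, so $\Phi_s(0.72)>\Phi_1(0.72)=0$. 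A one-line alternative in the spirit of your $\tau=\tfrac12$ computation is AM--GM:
\[
\Phi_s(\tau)\geq 2\Big((1-\tau^2)^{\frac{s-1}{2}}-(\tau^2)^{\frac{s-1}{2}}\Big)>0\qquad\text{for }\tau\in\big(\tfrac{1}{\sqrt2},1\big),
\]
because $x\mapsto x^{(s-1)/2}$ is decreasing. This gives $\tau_s^*\leq 1/\sqrt2<0.72$ uniformly in $s\in(0,1)$.

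Your treatment of $Q_s(\infty)=\beta_s$ is likewise only a sketch, but the Mellin route does work once the continuation is justified. With $f(\tau):=(1+\tau)^{s-1}+|1-\tau|^{s-1}$ one has, for $a\in(-1,-s)$,
\[
\int_0^\infty f(\tau)\tau^a\,d\tau=B(a+1,-s-a)+B(a+1,s)+B(-s-a,s).
\]
By contrast, $\int_0^\infty\Phi_s\tau^a\,d\tau$ converges only for $a\in(-s,2-s)$. Since these ranges are disjoint, ``the $-2|\tau|^{s-1}$ part contributes zero'' needs an argument. Write $\int_1^\infty f\tau^a\,d\tau=\int_1^\infty(f-2\tau^{s-1})\tau^a\,d\tau-\frac{2}{s+a}$. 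This continues the left-hand side meromorphically to $-1<\operatorname{Re}a<2-s$, with a single pole at $a=-s$. For $a>-s$ one has $-\frac{2}{s+a}=-2\int_0^1\tau^{s-1+a}\,d\tau$, which reassembles exactly $\int_0^\infty\Phi_s\tau^a\,d\tau$. At $a=s$ the first and third Beta terms cancel, because $\Gamma(s)/\Gamma(-s)=-\Gamma(s+1)/\Gamma(1-s)$. This leaves $B(s+1,s)=\tfrac12B(s,s)$, with $s=\tfrac12$ handled by continuity in $s$.

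Your ``more robust'' series alternative does not reduce to a Beta integral by itself. Only the piece $\int_0^1(1-\tau)^{s-1}\tau^s\,d\tau=B(s+1,s)$ does. Proving that the remaining terms cancel requires the same Gamma-function identity or an equivalent hypergeometric summation.
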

    \begin{proof}
        Direct computations as those in \cite[Lemma 5.1]{ehrnstrom_2023_onThePreciseCuspedBehaviour} show the claims concerning the monotonicity, root, and sign of $\Phi_s$, and \eqref{eq:integral_identities_appendix_lemma}. To see that $\tau_s^* \in (\frac12, 0.72)$, note
        \begin{align*}
        \frac{d^2}{ds^2} \big(\Phi_s(a) \big) &= \ln(1+a)^2(1+a)^{s-1}-2\ln(a)^2a^{s-1}+\ln(1-a)^2(1-a)^{s-1}. 
    \end{align*}
    As $a^{s-1}$ and $(1+a)^{s-1}$ with $a\in(0,1)$ are decreasing and increasing in $s$ respectively, 
    \begin{align*}
&        \frac{d^2}{ds^2} \big(\Phi_s(0.5) \big) =\frac{\ln\left(1.5\right)^{2}}{1.5^{1-s}}  -\frac{\ln\left(0.5\right)^{2}}{0.5^{1-s}}
\leq \ln(1.5)^2 -\ln(0.5)^2< -0.3< 0,\\
&\frac{d^2}{ds^2} \big(\Phi_s(0.72) \big) \geq \ln(0.28)^2-\frac{2\ln(0.72)^2}{0.72}+\frac{\ln(1.72)^2}{1.72} > 1.4>0,
    \end{align*}
for all $s\in[0,1]$. Consequently, 
\begin{align*}
    &\frac{d}{ds} \big(\Phi_s(0.5)\big) \geq \frac{d}{ds} \big(\Phi_s(0.5) \big) \bigg\vert_{s=1} > 1.0 >0\\
    &\frac{d}{ds} \big(\Phi_s(0.72)\big) \leq \frac{d}{ds} \big(\Phi_s(0.72) \big) \bigg\vert_{s=1} < -0.05 <0,
\end{align*}
for all $s\in [0,1]$. This then implies that 
\begin{align*}
      &\Phi_s(0.5)<\Phi_1(0.5)=0 \quad \text{and} \quad \Phi_s(0.72)>\Phi_1(0.72)=0, 
    \end{align*}
establishing the result.

The expression for $P_s$ follows by direct integration. For $\tau\in[0,1)$, we have that
\[
    (1+\tau)^{s-1}+(1-\tau)^{s-1}
    =
    2\sum_{k=0}^{\infty}
    \frac{\Gamma(s)}
    {\Gamma(2k+1)\Gamma(s-2k)}
    \tau^{2k}
\]
by the binomial theorem. Consequently,
\begin{align*}
    Q_s(t)
    &=
    \int_0^t
    \left(
        (1+\tau)^{s-1}
        +(1-\tau)^{s-1}
        -2\tau^{s-1}
    \right)\tau^s\,d\tau =
    -\frac{t^{2s}}{s}
    +\frac{2t^{s+1}}{s+1}
    +S_{s,\infty}(t).
\end{align*} Moreover, $S_{s,N}(t)\geq0$ for all $s\in(0,1)$, $t\in[0,1)$, $N\in \N \cup \{\infty \}$ since
\[
    \frac{\Gamma(s)}
    {\Gamma(2k+1)\Gamma(s-2k)}>0,
    \qquad
    s\in(0,1),\quad k\geq1.
\]
Finally, since $\Phi_s<0$ on $(0,\tau_s^*)$ we have
$    b_s=-Q_s(\tau_s^*)>0$, and \eqref{eq:b_s_explicit} follows directly from the expression for $Q_s$.
\end{proof}
The proof of the zeroth-order asymptotics follows \cite{ehrnstrom_2023_onThePreciseCuspedBehaviour},
except for the final sign condition. Establishing this condition
uniformly for $s\in[0.35,1)$ requires sharper estimates and rigorous
interval arithmetic. We isolate this step in the following lemma,
then present the remaining argument for completeness.
\begin{lem}[Sign condition]\label{lem:appendix_sign_condition}
For every $s\in[0.35,1)$, define
    \begin{align}\label{def:f_s_sigma_def}
        f_s(\sigma):= \int_0^{\tau_s^*}\delta_1^2(|\tau|^{s-1})\min(1, \sigma\tau^s) d\tau+\sigma^{-2}b_s+(\tfrac{B(s,s)}{2}+b_s)(1-\sigma^{-1}),
    \end{align}
    for $\sigma \geq 1,$ with $b_s>0$ as defined in Lemma \ref{lem:second_diff_properties}. Then $f_s$ satisfies
    $$f_s(\sigma)>0 \qquad \text{for all } \sigma\in(1,\infty).$$
\end{lem}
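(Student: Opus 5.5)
The plan starts from the observation that $f_s(1)=0$. For $\sigma=1$ and $\tau<\tau_s^*<1$ we have $\min(1,\tau^s)=\tau^s$, so the integral in \eqref{def:f_s_sigma_def} equals $Q_s(\tau_s^*)=-b_s$, and the remaining terms give $+b_s+0$. The task is therefore to show that $f_s$ leaves $0$ upwards and stays positive. I would set $t:=\sigma^{-1/s}\in(0,1)$, which is the point where $\sigma\tau^s=1$, and split into two regimes according to whether $t\ge\tau_s^*$ or $t<\tau_s^*$. In each regime the integral is expressed through $P_s$ and $Q_s$ from Lemma~\ref{lem:second_diff_properties}.

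\emph{Regime 1: $1<\sigma\le(\tau_s^*)^{-s}$.} Here $\min(1,\sigma\tau^s)=\sigma\tau^s$ on all of $(0,\tau_s^*)$, so
\[
f_s(\sigma)=-\sigma b_s+\sigma^{-2}b_s+(\beta_s+b_s)(1-\sigma^{-1}).
\]
Multiplying by $\sigma^2$ and factoring out $\sigma-1$ gives
\[
\sigma^2 f_s(\sigma)=(\sigma-1)\big[\beta_s\sigma-b_s(\sigma^2+1)\big].
\]
Thus positivity on this range is equivalent to $b_s/\beta_s<\sigma/(\sigma^2+1)$. The right-hand side is decreasing for $\sigma>1$, so it suffices to check this at the endpoint $\sigma=(\tau_s^*)^{-s}$:
\[
\frac{b_s}{\beta_s}<\frac{(\tau_s^*)^{s}}{1+(\tau_s^*)^{2s}}.
\]
This is a one-parameter inequality in $s$. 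I would verify it with interval arithmetic:
\begin{itemize}
\item enclose $\tau_s^*$ on each $s$-box by bisection on $\Phi_s$, using monotonicity of $\Phi_s$ on $(0,1)$ and the a priori bracket $(\tfrac12,0.72)$;
\item bound $b_s$ from above by \eqref{eq:b_s_explicit}, dropping $S_{s,\infty}\ge0$ or keeping a few terms $S_{s,N}$ if the truncation is too crude;
\item enclose $\beta_s=B(s,s)/2$ through Gamma-function enclosures.
\end{itemize}

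\emph{Regime 2: $\sigma>(\tau_s^*)^{-s}$, i.e.\ $t<\tau_s^*$.} Splitting the integral at $t$ gives
\[
f_s=t^{-s}Q_s(t)+P_s(\tau_s^*)-P_s(t)+t^{2s}b_s+(\beta_s+b_s)(1-t^s).
\]
Every term is explicit by Lemma~\ref{lem:second_diff_properties}. For a lower bound I would discard $S_{s,\infty}(t)\ge0$ in $Q_s(t)$, or keep $S_{s,N}$ for small $N$, since each summand is nonnegative. This leaves an elementary function of $(s,t)\in[0.35,1)\times(0,\tau_s^*]$, which I would check is positive on a box subdivision using outward-rounded interval arithmetic. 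As $t\to0$ the expression tends to $P_s(\tau_s^*)+\beta_s+b_s$, which must be positive uniformly. Near $t=0$ I would bound the small-$t$ terms analytically, using $t^{-s}Q_s(t)=O(t^s)$ and $P_s(t)=O(t^s)$, rather than by boxes reaching $t=0$.

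\emph{Main obstacle.} The main obstacle is uniformity near $s\to1$. There $\Phi_s\to0$, so $b_s\to0$ and $P_s\to0$, while $\beta_s\to\tfrac12$. Both regimes degenerate: regime 1 shrinks to nothing, and the margins in regime 2 become $O(1-s)$ quantities that plain interval boxes cannot resolve near $s=1$, which is not even included in the closed domain. I would first try to normalize. The $\Phi_s$-dependent quantities ($b_s$, $P_s$, $Q_s$) are all proportional to $1-s$ to leading order, via $\partial_s\Phi_s|_{s=1}$. The dominant term $\beta_s(1-t^s)$ stays positive and of order one. So I would treat $s\in[1-\epsilon,1)$ with an explicit Taylor bound $|b_s|,|P_s(\cdot)|\lesssim 1-s$ and show that these are dominated by $\beta_s(1-\sigma^{-1})$, except in the $\sigma\to1$ corner, which is covered by the factorization in regime 1. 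Interval arithmetic then handles $s\in[0.35,1-\epsilon]$. The lower limit $0.35$ is presumably where the ratio inequality in regime 1 (effectively $\beta_s>2b_s$ near $\sigma=1$, the sign of $f_s'(1)$) starts to fail.
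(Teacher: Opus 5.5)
Your argument is essentially the paper's, with one missing ingredient and one misdiagnosis. In the variable $z=\sigma^{-1}$, your regime 1 is exactly the set where $a_s(z)=\min\{z^{1/s},\tau_s^*\}=\tau_s^*$. There your factorization is the paper's identity $\widetilde f_s(z)=(1-z)\bigl(\beta_s-b_s(z+z^{-1})\bigr)$. Your regime-2 expression is the paper's $L_{s,N}$ with $z=t^s$, certified by outward-rounded interval arithmetic using enclosures of $\tau_s^*$, $b_s$ and $\beta_s$.

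The differences are bookkeeping. The paper uses fixed thresholds $z\in(0,\frac12]$, $[\frac12,0.9]$, $[0.9,1)$. It applies the factorization only on $[0.9,1)$, with the crude bound $z+z^{-1}\le\frac{181}{90}$, and encloses $a_s(z)$ directly on boxes straddling the branch point. You instead split exactly at $z=(\tau_s^*)^s$. This turns more of the domain into a one-parameter check, but the inequality is tighter: at $s=0.35$ it fails if $S_{s,\infty}$ is dropped entirely from the upper bound for $b_s$, so your fallback of keeping $S_{s,N}$ is needed. Boxes straddling $t=\tau_s^*$ are harmless for you. For $t\ge\tau_s^*$, your regime-2 expression minus the true value equals $\int_{\tau_s^*}^t\Phi_s(\tau)(\tau^st^{-s}-1)\,d\tau\le0$, so it remains a lower bound.

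The missing ingredient is in regime 2. There $b_s$ enters with the positive coefficient $t^{2s}-t^s+1$ and cannot be discarded: without it the $t\to0$ limit is $P_s(\tau_s^*)+\beta_s$, which is negative for $s=\frac12$. You therefore need a \emph{lower} enclosure of $b_s$, which means an upper bound on the full series $S_{s,\infty}$ near $\tau_s^*$. The paper gets this from the geometric tail bound $R_{s,M}(t)\le q_{s,M+1}(t)/(1-t^2)$.

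Your diagnosis of the limit $s\to1$ is wrong, although your remedy works. Since $\Phi_s/(1-s)\to\log\frac{\tau^2}{1-\tau^2}$, one has $\tau_s^*\to1/\sqrt2$. So regime 1 tends to $\sigma\in(1,\sqrt2]$ rather than shrinking. Moreover the continuous extension is $f_1(\sigma)=\frac12(1-\sigma^{-1})$, so the margin in regime 2 stays of order one; the problem gets easier near $s=1$, not harder.

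The only genuine difficulty there is bracketing the root of $\Phi_s=O(1-s)$ by interval evaluation. The paper removes it by working with $\Psi_s=\Phi_s/(1-s)$, written without any division by $1-s$, and certifying on the closed interval $s\in[0.35,1]$. Your perturbative treatment of $[1-\epsilon,1)$ is a valid alternative. It compares $b_s$ and $|P_s|$, both $O(1-s)$, against $\beta_s(1-\sigma^{-1})$, and uses the factorization near $\sigma=1$. On that interval it needs only the a priori bracket $\tau_s^*\in(\frac12,0.72)$.
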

The idea is to rewrite the required sign condition in a form suitable for rigorous interval evaluation, and then subdivide the relevant parameter domains and verify positivity on each resulting box. Recall that interval arithmetic replaces each real-valued operation by an interval extension guaranteed to contain all possible values on the input box. Thus, if the interval evaluation of $F$ on $I\times T$ yields $[L,U]$, then
\[
    F(s,t)\in[L,U]
    \qquad\text{for every }(s,t)\in I\times T,
\]
and in particular $\inf_{I\times T}F\geq L$ and $\sup_{I\times T}F\leq U$.
\begin{proof}
     $\bullet$ \textbf{Reformulation of the sign condition.} It is convenient to make the change of variables $z:=\sigma^{-1}\in(0,1]$ and define $\widetilde f_s(z):=f_s(z^{-1})$. Then
\begin{align}\label{eq:f_tilde_def}
    \widetilde f_s(z)
    &=
    \int_0^{\tau_s^*}
    \Phi_s(\tau)
    \min\left(1,\tfrac{\tau^s}{z}\right)\,d\tau
    +z^2b_s
    +(\beta_s+b_s)(1-z).
\end{align}
In particular, $\widetilde f_s(1)=f_s(1)=0$, and it remains to prove that     $\widetilde f_s(z)>0$ for  $s\in[0.35,1),$ $ z\in(0,1).$ Set $a_s(z):=\min\{z^{1/s},\tau_s^*\}$. The minimum in \eqref{eq:f_tilde_def} changes branch at
$\tau=z^{1/s}$, which motivates splitting the integral at this point:
\begin{align}
    \widetilde f_s(z)
    &=
    (1-z)\beta_s+(z^2-z+1)b_s
    +P_s(\tau_s^*)-P_s(a_s(z))
    +\frac{Q_s(a_s(z))}{z} \notag\\
    &=
    (1-z)\beta_s+(z^2-z+1)b_s \notag\\
    &\quad
    +\frac{
        (1+\tau_s^*)^s-(1-\tau_s^*)^s-2(\tau_s^*)^s
        -(1+a_s(z))^s+(1-a_s(z))^s+2a_s(z)^s
    }{s} \notag\\
    &\quad
    -\frac{a_s(z)^{2s}}{sz}
    +\frac{2a_s(z)^{s+1}}{(s+1)z}
    +\frac{S_{s,\infty}(a_s(z))}{z}.
    \label{eq:f_tilde_explicit}
\end{align}
Since $S_{s,N}\leq S_{s,\infty}$, we therefore have
\begin{align}
    \widetilde f_s(z)\geq L_{s,N}(z)
    &:=
    (1-z)\beta_s+(z^2-z+1)b_s \notag\\
    &\quad
    +\frac{
        (1+\tau_s^*)^s-(1-\tau_s^*)^s-2(\tau_s^*)^s
        -(1+a_s(z))^s+(1-a_s(z))^s+2a_s(z)^s
    }{s} \notag\\
    &\quad
    -\frac{a_s(z)^{2s}}{sz}
    +\frac{2a_s(z)^{s+1}}{(s+1)z}
    +\frac{S_{s,N}(a_s(z))}{z}.
    \label{eq:f_tilde_lower_bound}
\end{align}
We will also use the equivalent representation
\begin{align}
    \widetilde f_s(z)
    &=
    (1-z)\left(
        \beta_s-b_s\left(z+\frac1z\right)
    \right)
    +\int_{a_s(z)}^{\tau_s^*}
        \Phi_s(\tau)
        \left(1-\frac{\tau^s}{z}\right)\,d\tau,
    \label{eq:f_tilde_sign_form}
\end{align}
where the integral is nonnegative.

$\bullet$ \textbf{Interval enclosures}.
We now describe the interval bounds used in the computer-assisted verification.
For this purpose we work on the closed interval $s\in[0.35,1]$. Let $I=[s_-,s_+]\subset[0.35,1]$. All interval evaluations below will be performed using outward-rounded interval arithmetic.

Since $\Phi_1\equiv0$, we regularize the root condition by defining
\[
    \Psi_s(\tau):=\frac{\Phi_s(\tau)}{1-s},
    \qquad s\in[0.35,1),
\]
and extending continuously to $s=1$ by $\Psi_1(\tau)
    =
    \log\left(\frac{\tau^2}{1-\tau^2}\right).$
Thus $\Psi_s$ has the same sign and root as $\Phi_s$ for $s<1$, while
$\Psi_1$ has the unique root $\tau_1^*=\frac{1}{\sqrt{2}}$. For the interval evaluation we will use the equivalent nonsingular
representation
\begin{align*}
    &\Psi_s(\tau)
    =
    E_s(1+\tau)-2E_s(\tau)+E_s(1-\tau),
    \qquad s\in[0.35,1],\\
    \text{where} \quad 
    &E_s(x):=
    -\log(x)\,
    \operatorname{exprel}\bigl((s-1)\log x\bigr),\quad 
    \operatorname{exprel}(y):=
    \begin{cases}
        \tfrac{e^y-1}{y}, & y\neq0,\\[1mm]
        1, & y=0.
    \end{cases}
\end{align*}
Indeed, for $s<1$, $ E_s(x)=\frac{x^{s-1}-1}{1-s}$, while $E_1(x)=-\log x$. Thus this representation is well-defined and
continuous at $s=1$, and contains no division by $(1-s)$. Note that for an interval $Y=[y_-,y_+]$ containing zero we do \textit{not} evaluate the
quotient in the definition of $\operatorname{exprel}$ by interval
division. Instead,  since $\operatorname{exprel}(y)=\int_0^1 e^{ty}\,dt$ it is strictly increasing and hence
$\operatorname{exprel}(Y)
    \subseteq
    \bigl[\operatorname{exprel}(y_-),
          \operatorname{exprel}(y_+)\bigr],$
with $\operatorname{exprel}(0)=1$. If
$t_-,t_+\in(1/2,0.72)$ satisfy
 $\Psi_s(t_-)<0$ and $    \Psi_s(t_+)>0 $ for $ s\in I$,
then $\tau_s^*\in T_I:=[t_-,t_+]$ for $ s\in I.$ Such bounds are obtained by interval evaluation and subdivision as necessary. 

To obtain a rigorous enclosure of $b_s$, write the $k$-th summand of
$S_{s,\infty}(t)$ as
\[
    q_{s,k}(t)
    :=
    2c_{s,k}
    \frac{t^{2k+s+1}}{2k+s+1}, \qquad c_{s,k}:=
    \frac{\Gamma(s)}
         {\Gamma(2k+1)\Gamma(s-2k)}
\]
Although we retain the Gamma-function notation, for the interval
evaluation we use the equivalent recurrence
\[
    c_{s,1}=\frac{(1-s)(2-s)}{2},\qquad
    c_{s,k+1}
    =
    c_{s,k}
    \frac{(2k+1-s)(2k+2-s)}
         {(2k+1)(2k+2)}.
\]
This also gives the continuous extension $c_{1,k}=0$, avoiding the
Gamma-function poles at $s=1$. For $s<1$, the recurrence relation gives
\[
    \frac{q_{s,k+1}(t)}{q_{s,k}(t)}
    =
    t^2
    \frac{(2k+1-s)(2k+2-s)}
         {(2k+1)(2k+2)}
    \frac{2k+s+1}{2k+s+3}
    <t^2.
\]
At $s=1$ all the terms $q_{s,k}$ vanish, so the resulting tail
estimate remains valid on the closed interval $s\in[0.35,1]$.

Consequently, with $R_{s,M}(t):=S_{s,\infty}(t)-S_{s,M}(t)$,
we have, for $t\leq0.72$,
\begin{align}\label{eq:S_tail_bound}
    0\leq q_{s,M+1}(t)
\leq  R_{s,M}(t)
    \leq \frac{q_{s,M+1}(t)}{1-t^2}
    \leq \frac{q_{s,M+1}(t)}{1-0.72^2}.
\end{align}
Since $\frac{d}{dt}\bigl(-Q_s(t)\bigr)=-\Phi_s(t)t^s$, Lemma \ref{lem:second_diff_properties} implies that $-Q_s$ attains
its maximum at $t=\tau_s^*$, with value $b_s$.
Moreover, the tail bounds \eqref{eq:S_tail_bound} give
\begin{align}\label{eq:B_s_bounds}
\frac{t^{2s}}{s}
-\frac{2t^{s+1}}{s+1}
-S_{s,M}(t)
-\frac{q_{s,M+1}(t)}{1-t^2}
\leq -Q_s(t)
\leq
\frac{t^{2s}}{s}
-\frac{2t^{s+1}}{s+1}
-S_{s,M}(t)
-q_{s,M+1}(t).
\end{align}
Suppose $\tau_s^*\in T_I=[t_-,t_+]$ for every $s\in I$.
Since $b_s\geq -Q_s(t_\pm)$, interval evaluation of the left-hand
side of \eqref{eq:B_s_bounds} on $I\times\{t_-\}$ and
$I\times\{t_+\}$ gives two lower bounds for $b_s$; we take
the larger of their lower endpoints.
Interval evaluation of the right-hand side on the whole box
$I\times T_I$ gives an upper bound, since this box contains
$(s,\tau_s^*)$ for every $s\in I$.
Thus we obtain
\[
b_s\in[\underline b_I,\overline b_I],
\qquad s\in I.
\]
Similarly, $\beta_s$ is evaluated directly by interval arithmetic.
No tail estimate is needed in \eqref{eq:f_tilde_lower_bound},
where we only use $S_{s,N}\leq S_{s,\infty}$.

$\bullet$ \textbf{Splitting the $z$-domain.} We now divide the remaining verification into three regions in $z$, each yielding simplifications that are useful for the interval arithmetic.

\textbf{Region I: $z\in(0,\frac{1}{2}]$.}
Since $s\leq 1$ and $\tau_s^*>\frac12$, we have
$z^{1/s}\leq z\leq \frac12<\tau_s^*,$ and hence $a_s(z)=z^{1/s}.$ From  \eqref{eq:f_tilde_lower_bound} we then get
\begin{align}
    \widetilde f_s(z)\geq L^{(1)}_{s,N}(z)
    &:=
    (1-z)\beta_s+(z^2-z+1)b_s \notag\\
    &\quad
    +\frac{
        (1+\tau_s^*)^s-(1-\tau_s^*)^s-2(\tau_s^*)^s
        -(1+z^{1/s})^s+(1-z^{1/s})^s
    }{s} \notag\\
    &\quad
    +\frac{z}{s}
    +\frac{2z^{1/s}}{s+1}
    +2\sum_{k=1}^N c_{s,k} \frac{z^{(2k+1)/s}}{2k+s+1}.
    \label{eq:f_tilde_lower_bound_region1}
\end{align}
The right-hand side extends continuously to $z=0$, so
\eqref{eq:f_tilde_lower_bound_region1} can be verified by interval
arithmetic on the closed region $(s,z)\in[0.35,1]\times[0,\frac{1}{2}]$.

\textbf{Region II: $z\in[\frac12,0.9]$.}
In this region the switching point $z^{1/s}$ may lie on either side of
$\tau_s^*$, so we retain the general lower bound
\eqref{eq:f_tilde_lower_bound}. Let $I\subset[0.35,1]$ and
$Z\subset[\frac12,0.9]$ be interval boxes, and let
$T_I$ be the corresponding enclosure of $\tau_s^*$ obtained above.
On a box $I\times Z$, we evaluate $z^{1/s}$ and use the enclosure
$T_I$ of $\tau_s^*$. If these enclosures are disjoint, the branch of
$a_s(z)=\min\{z^{1/s},\tau_s^*\}$ is determined throughout the box.
Otherwise, $a_s(z)$ is enclosed directly by interval arithmetic, and
the box is subdivided further if the resulting bound for
$L_{s,N}(z)$ is not sufficiently sharp. On each resulting box we evaluate $L_{s,N}(z)$ using outward-rounded
interval arithmetic. A strictly positive lower endpoint certifies
$\widetilde f_s(z)>0$ on the entire box.

\textbf{Region III: $z\in[0.9,1)$.}
For $s\in[0.35,1]$ and $z\in[0.9,1)$ we have
\[
    z^{1/s}\geq 0.9^{1/0.35}=0.9^{20/7}>0.72>\tau_s^*,
\]
and therefore $a_s(z)=\tau_s^*$. Hence the integral in \eqref{eq:f_tilde_sign_form} vanishes and
\[
    \widetilde f_s(z)
    =
    (1-z)\left(
        \beta_s-b_s\left(z+\tfrac1z\right)
    \right).
\]
Since $z\mapsto z+z^{-1}$ is decreasing on $(0,1]$,  $z+\frac1z
    \leq
    0.9+\frac1{0.9}
    =
    \frac{181}{90}$. Consequently
\begin{align}\label{eq:f_tilde_region3}
    \widetilde f_s(z)
    \geq
    (1-z)\left(
        \beta_s-\tfrac{181}{90}b_s
    \right).
\end{align}
Since $1-z>0$, it remains only to verify
\[
    \beta_s-\tfrac{181}{90}b_s>0,
    \qquad s\in[0.35,1].
\]
This is checked by interval arithmetic on the $s$-intervals $I$, using
the lower interval bound for $\beta_s$ and the upper bound
$\overline b_I$ for $b_s$ obtained above.

$\bullet$ \textbf{Computer verification output.}
The interval computations were performed with Python 3.12.4
and \texttt{python-flint} 0.9.0 (FLINT 3.6.0),
using Arb ball arithmetic at 320-bit precision.
We used $N=2$ in the finite lower bounds and $M=16$ in the auxiliary
series-tail enclosure of $b_s$. All subdivision endpoints were exact rational
numbers, and all numerical bounds were evaluated with outward rounding.

The computation certified $L^{(1)}_{s,2}(z)>0$ on
$[0.35,1]\times[0,\frac{1}{2}]$ and $L_{s,2}(z)>0$ on
$[0.35,1]\times[\frac{1}{2},0.9]$. It also certified
$H_s:=\beta_s-\frac{181}{90}b_s>0$ for every $s\in[0.35,1]$.
The numbers of terminal boxes (intervals for $H_s$) and conservative
uniform lower bounds obtained were as follows:
\begin{center}
\begin{tabular}{lrr}
\hline
Quantity & Certified boxes/intervals & Lower bound \\
\hline
$L^{(1)}_{s,2}(z)$ & 5612 & $0.000001144938$ \\
$L_{s,2}(z)$ & 124459 & $0.000000004352$ \\
$H_s$ & 18 & $0.019678658214$ \\
\hline
\end{tabular}
\end{center}
These lower bounds are the smallest stored certified margins in each region,
rounded further down for display; they are not estimates of the true minima.
No unresolved boxes remained. Exact coverage of the stated domains was checked,
and every recorded bound was re-evaluated successfully from the embedded
machine-readable certificate. The verification code and certificate are
contained in the supplementary notebook
\texttt{new\_verify\_appendixA.ipynb}\footnote{The notebook is also available on the author's
\href{https://github.com/robinol99/whitham_paper_verify_appendix/blob/main/new_verify_appendixA.ipynb}{GitHub page}.}.

Thus the required sign condition
$\widetilde f_s(z)>0$ holds for $s\in[0.35,1]$ and $z\in(0,1)$,
where $s=1$ is interpreted by continuous extension. In particular, the sign
condition holds throughout the range $s\in[0.35,1)$ of the lemma statement.
\end{proof}

\nc

\begin{proof}[Proof of the $n=0$ case of Main Theorem \ref{thm::u^n_limit} \cite{ehrnstrom_2023_onThePreciseCuspedBehaviour}]
    Let $$g(x):=\frac{u(x)}{x^s}, \qquad m:= \liminf_{x\to 0} g(x), \qquad M:= \limsup_{x\to 0} g(x).$$
    We will show $m=M=\frac{C}{2}B(s,s)=\frac{C}{2}\frac{\Gamma(s)^2}{\Gamma(2s)}$, where $B$ denotes the beta function.
    
    \medskip
    \noindent \textbf{Step 1: $m>0$ and $M<\infty$.}    
    From \eqref{eq:u_second_difference} we have that 
    \begin{align}\label{eq:u_lim_expression}
        g(x)^2=\frac{1}{x^{2s}}\Big[\int_0^x+\int_x^\nu +\int_\nu^\infty \Big] \delta_x^2K(y)u(y) dy=:J_1+J_2+J_3.
    \end{align}
    Since $u$ is monotone on $[0,\nu]$ (assumption \ref{assump_Ulim}) we can apply the second mean value theorem for integrals for $J_1$:
    \begin{align*}
        J_1 = \frac{u(x)}{x^{2s}}\int_{\lambda(x)x}^x \delta_x^2K(y)dy = g(x)\int_{\lambda(x)}^1\Big( C\delta_1^2(|\tau|^{s-1})+ x^{3-s} K''_{\text{reg}}(\tau x +\xi_{\tau x}) \Big) d\tau,
    \end{align*}
    for some $\lambda(x)\in(0,1)$. Here we have also used \eqref{eq:delta2_K_identity} and \eqref{eq:K_reg_second_difference}. For $J_2$ we have by \eqref{eq:delta2_K_identity}, the definition of $g$, and \eqref{eq:K_reg_second_difference} that
    \begin{align*}
        J_2=C\int_1^{\frac{\nu}{x}}\delta_1^2(|\tau|^{s-1})\tau^s g(\tau x) d\tau+x^{2-2s}\int_x^\nu K''_{\text{reg}}(y+\xi_y)u(y)dy,
    \end{align*}
    and for $J_3$ we note that
    \begin{align*}
        0\leq \int_\nu^\infty  \delta_{x}^2K(y)u(y) dy \leq -\| u\|_\infty K'(\nu-x)x^2
    \end{align*}
    The lower bound follows  by the convexity of $K$ (assumption \ref{assump_Ulim}) which yields $\delta_{x}^2K(y)\geq 0$ for $0\leq x <y$, while the upper follows by the mean-value theorem and $-K'$ being non-increasing. Consequently, with 
    $$\underline{g}(x):=\min_{y\in [x,\nu]}g(y), \qquad \overline{g}(x):=\max_{y\in [x,\nu]}g(y),$$
    we have the estimates 
    \begin{align}
        &g(x)^2 \gtrsim g(x)\Big(\int_{0}^1 \delta_1^2(|\tau|^{s-1}) d\tau +O(x^{3-s})\Big)+\underline{g}(x)\Big(\int_1^{\frac{\nu}{x}}\delta_1^2(|\tau|^{s-1})\tau^s d\tau +O(x^{2-2s})\Big),\label{eq:g_grt} \\
        &g(x)^2 \lesssim g(x)\Big(\int_{\tau_s^*}^1 \delta_1^2(|\tau|^{s-1}) d\tau +O(x^{3-s})\Big)+\overline{g}(x)\Big(\int_1^{\frac{\nu}{x}}\delta_1^2(|\tau|^{s-1})\tau^s d\tau +O(x^{2-2s})\Big), \label{eq:g_leq}
    \end{align}
    as $x\to 0$. The first integral bounds in each line follow from Lemma \ref{lem:second_diff_properties}.

    Assume now $m=0$ for the sake of contradiction. Then we may pick a realizing sequence $\{ x_k\}_{k\in \N} \subset (0,\nu]$ for $m$ in such a way that $g = \underline{g}$ along the sequence. Evaluating \eqref{eq:g_grt} at this $x_k$ we can divide by $g(x_k)$ and let $k\to \infty$, yielding
    \begin{align*}
        m \gtrsim \int_0^1 \delta_1^2(|\tau|^{s-1}) d\tau +\int_1^{\infty}\delta_1^2(|\tau|^{s-1}) \tau^s d\tau = \int_1^{\infty}\delta_1^2(|\tau|^{s-1}) (\tau^s-1) d\tau >0,
    \end{align*}
    by the positivity of the integrand. The equality follows from Lemma \ref{lem:second_diff_properties}. Thus we have a contradiction, so $m>0$. Similar arguments for \eqref{eq:g_leq} yield $M<\infty$.

    \medskip
    \noindent \textbf{Step 2: Sharper bounds for $m$ and $M$.} Now that we know $0<m\leq M <\infty$ we can derive the sharper bounds
    \begin{align}
        M^2 &\leq C\Big(m \int_0^{\tau_s^*}\delta_1^2(|\tau|^{s-1})\tau^s d\tau + M \int_{\tau_s^*}^\infty \delta_1^2(|\tau|^{s-1}) \tau^s d\tau\Big), \label{eq:M_sharper_bound} \\
                m^2 &\geq C\Big(\int_0^{\tau_s^*}\delta_1^2(|\tau|^{s-1})\min(m, M\tau^s) d\tau + m \int_{\tau_s^*}^\infty \delta_1^2(|\tau|^{s-1}) \tau^s d\tau\Big), \label{eq:m_sharper_bound}
    \end{align}
    as follows: Knowing $g$ is bounded lets us use \eqref{eq:delta2_K_identity}, $J_2$ and $J_3$ to replace \eqref{eq:u_lim_expression} by 
    \begin{align*}
        g(x)^2=C\int_0^{\frac{\nu}{x}} \delta_1^2(|\tau|^{s-1})\tau^s g(\tau x) d\tau  + O(x^{2-2s})
    \end{align*}
    as $x\rightarrow 0$. Then, since $\delta_1^2(|\tau|^{s-1})$ is negative on $(0,\tau_s^*)$ and positive on $(\tau_s^*,\infty)$, 
    \begin{align*}
        M^2\leq C(\inf_{y \in (0,\nu]} g(y)) \int_0^{\tau_s^*}  \delta_1^2(|\tau|^{s-1})\tau^s d\tau + C(\sup_{y \in (0,\nu]}g(y)) \int_{\tau_s^*}^\infty \delta_1^2(|\tau|^{s-1})\tau^s d\tau,
    \end{align*}
    whence letting $\nu \rightarrow 0$ yields \eqref{eq:M_sharper_bound}. For \eqref{eq:m_sharper_bound}, note that 
    $$\tau^s g(\tau x) \leq \min \big(g(x), \tau^s\sup_{y\in(0,\nu]} g(y)\big),$$
    for every $\tau \in (0,1)$, $x\in (0,\nu]$, since $u$ is increasing on $(0,\nu]$. 
    This yields 
    \begin{align*}
        g(x)^2 &\geq C\int_0^{\tau_s^*} \delta_1^2(|\tau|^{s-1}) \min \big(g(x), \tau^s\sup_{y\in(0,\nu]} g(y)\big) d\tau\\
        &\qquad +C(\inf_{y\in(0,\nu]} g(y)) \int_{\tau_s^*}^\infty  \delta_1^2(|\tau|^{s-1})\tau^s d\tau + O(x^{2-2s}) \qquad \text{ as $x\to 0$}.
    \end{align*}
   Taking the limit along a sequence realizing $m$, then letting $\nu\to 0$, yields \eqref{eq:m_sharper_bound}.

    \medskip 
    \noindent \textbf{Step 3: $m=M$.} We first define $\sigma:=\frac{M}{m} \geq 1$ and rewrite \eqref{eq:M_sharper_bound} and \eqref{eq:m_sharper_bound} purely in terms of $\sigma$ and $m$:
    \begin{align}
        m &\leq C\Big(\sigma^{-2} \int_0^{\tau_s^*}\delta_1^2(|\tau|^{s-1})\tau^s d\tau + \sigma^{-1} \int_{\tau_s^*}^\infty \delta_1^2(|\tau|^{s-1}) \tau^s d\tau\Big), \label{eq:M_sharper_bound_sigma} \\
                m &\geq C\Big(\int_0^{\tau_s^*}\delta_1^2(|\tau|^{s-1})\min(1, \sigma\tau^s) d\tau + \int_{\tau_s^*}^\infty \delta_1^2(|\tau|^{s-1}) \tau^s d\tau\Big). \label{eq:m_sharper_bound_sigma}
    \end{align}
    When $\sigma=1$, both right-hand sides evaluate to $C\int_0^\infty \delta_1^2(|\tau|^{s-1})\tau^s d\tau = C\frac{B(s,s)}{2}$, so $C\frac{B(s,s)}{2}=m=\frac{M}{\sigma}=M$. Thus, if no $m>0$ satisfies both \eqref{eq:m_sharper_bound_sigma} and \eqref{eq:M_sharper_bound_sigma} when $\sigma>1$ we are done. To show this, consider $f_s(\sigma)$ defined in \eqref{def:f_s_sigma_def}. Note that $Cf_s(\sigma)$ is the right-hand side of \eqref{eq:m_sharper_bound_sigma} minus that of \eqref{eq:M_sharper_bound_sigma}. Thus, we have that $f_s(1)=0$ and want to show that $f_s$ is positive on $(1,\infty)$.  The positivity of $f_s(\sigma)$ on $(1,\infty)$ was established in Lemma \ref{lem:appendix_sign_condition},  concluding the proof.\nc
\end{proof}

\section{}\label{appendix_hypergeom}
Computing the final expression in \eqref{eq:big_computation} requires some tedious calculations. The first step is to translate the expression to the language of hypergeometric functions so we can borrow from its extensive literature. 
For the sake of being self-contained we recap the necessary definitions and results from the theory. We follow the notation of \cite{duverney2024introduction}, and the formulas below are taken from \cite{duverney2024introduction} and \cite{abramowitz_stegun}.

We define, for fixed $a,b \in \R$, $c \in \R\setminus \Z_{\leq 0}$, the \textit{Gauss hypergeometric function}
\begin{align}\label{eq:def_gauss_hypergeom}
{}_2F_1\left( \begin{array}{c} a,\, b \\ c \end{array} \middle| \  x \right) = \frac{\Gamma(c)}{\Gamma(a)\Gamma(b)}\sum_{k=0}^{\infty} \frac{\Gamma(a+k)\Gamma(b+k)}{\Gamma(c+k)} \frac{x^k}{k!}, \qquad  |x|<1, 
\end{align}
which can (by the integral representation below) be uniquely extended by analytic continuation on $(-\infty, 1)$. 
Note that ${}_2F_1$ is symmetric in $a$ and $b$. When $a=c$ or $b=c$, ${}_2F_1$ reduces to the binomial function \cite[15.1.8]{abramowitz_stegun}:
\begin{align}\label{eq::gauss_hypergeom_binomial}
    {}_2F_1\left( \begin{array}{c} a, b \\ b \end{array} \middle| x \right) =(1-x)^{-a}, \qquad {}_2F_1\left( \begin{array}{c} a, b \\ a \end{array} \middle| x \right) =(1-x)^{-b}.
\end{align}
Especially important for us is the following integral representation \cite[(4.10)]{duverney2024introduction}:
\begin{align}\label{eq::hypergeom_integral_repr}
{}_2F_1\left( \begin{array}{c} a, b \\ c \end{array} \middle| x \right)
&= \frac{\Gamma(c)}{\Gamma(b)\Gamma(c-b)} \int_0^1 t^{b-1} (1-t)^{c-b-1} (1-xt)^{-a} \, dt,
\end{align}
valid for $c>b>0$, $x\in(-\infty,1)$. We will need Euler's transformation formula \cite[Theorem 4.8]{duverney2024introduction}:
\begin{align}\label{eq:euler_transf_form}
    {}_2F_1\left( \begin{array}{c} a,\, b \\ c \end{array} \middle| \  x \right) = (1-x)^{-a}{}_2F_1\left( \begin{array}{c} a,\, c-b \\ c \end{array} \middle| \  \frac{x}{x-1} \right), \qquad x \in (-\infty, 1).
\end{align}
We also need the following linear transformation formula  \cite[(15.3.7)]{abramowitz_stegun}:

\begin{align}
&\begin{aligned}
   {}_2F_1\left( \begin{array}{c} a,\, b \\ c \end{array} \middle| x \right)
   &= (- x)^{-a} \frac{\Gamma(c)\Gamma(b - a)}{\Gamma(b)\Gamma(c - a)}
\, {}_2F_1\left( \begin{array}{c} a,\, 1-c +a \\ 1-b+a \end{array} \middle| \ \frac{1}{x} \right) \\
&\qquad  + (- x)^{-b} \frac{\Gamma(c)\Gamma(a - b)}{\Gamma(a)\Gamma(c - b)}
\, {}_2F_1\left( \begin{array}{c} b,\, 1-c+b \\ 1-a+b \end{array} \middle| \ \frac{1}{x} \right),
\end{aligned}\label{eq:linear_transf_form_2}
\end{align}
which is valid for $x\in(-\infty, 0)$ and  $a-b\notin\Z$. Below we will, without mention, repeatedly use the identity $z\Gamma(z)=\Gamma(z+1)$.

\begin{prop}\label{appendix_prop_1} Let $s\in(0,1)$, $n\in\N\cup\{0 \}$ with the convention $\sum_{i=0}^{-1}=0$. Then
    \begin{align*}
        &\left(\int_{0}^{\frac{2}{3}} +\int_{0}^{\frac{1}{3}}\right)\tau^{s-1}(1-\tau)^{s-1-n} d\tau -3^{n+1-2s}\sum_{i=0}^{n-1} \frac{1}{2^{i+1-s}}\frac{\Gamma(i+1-s)\Gamma(n-i-s)}{\Gamma(1-s)\Gamma(n+1-s)} \\
        &\qquad= \begin{cases}
            \frac{\Gamma(s)^2}{\Gamma(2s)}, \qquad &n=0,\\
            0, \qquad &n\geq 1, \quad s=\frac{1}{2},\\
            \frac{\Gamma(s)\Gamma(s-n)}{\Gamma(2s-n)}, \qquad & n\geq 1,  \quad s\neq \frac{1}{2}.
        \end{cases}
    \end{align*}
\end{prop}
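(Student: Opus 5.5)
We prove the identity by induction on $n$, using a contiguous recurrence for incomplete beta integrals rather than the full hypergeometric machinery. Write
\[
B_x(a,b):=\int_0^x\tau^{a-1}(1-\tau)^{b-1}\,d\tau ,\qquad 0<x<1,\ a>0,\ b\in\R .
\]
This is well defined for every real $b$ because $x<1$. The left-hand side of the proposition is then $B_{2/3}(s,s-n)+B_{1/3}(s,s-n)$ minus the finite sum.

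\textbf{Base case $n=0$.} The integrand $\tau^{s-1}(1-\tau)^{s-1}$ is symmetric under $\tau\mapsto1-\tau$. Hence $B_{1/3}(s,s)=\int_{2/3}^1\tau^{s-1}(1-\tau)^{s-1}\,d\tau$, and therefore
\[
B_{2/3}(s,s)+B_{1/3}(s,s)=B(s,s)=\frac{\Gamma(s)^2}{\Gamma(2s)} .
\]
The sum is empty, so the $n=0$ case follows.

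\textbf{Key recurrence.} Differentiating $\tau^a(1-\tau)^b$ and writing $\tau^a=\tau^{a-1}\bigl(1-(1-\tau)\bigr)$ gives
\[
\frac{d}{d\tau}\bigl[\tau^a(1-\tau)^b\bigr]=(a+b)\,\tau^{a-1}(1-\tau)^b-b\,\tau^{a-1}(1-\tau)^{b-1}.
\]
Integrating from $0$ to $x$ (valid since $a=s>0$) yields
\[
b\,B_x(a,b)=(a+b)\,B_x(a,b+1)-x^a(1-x)^b .
\]
We apply this with $a=s$ and $b=s-n$; note $b\neq0$ since $s\notin\Z$. It expresses $B_x(s,s-n)$ through $B_x(s,s-n+1)$ and a boundary term.

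\textbf{Induction step.} Let $G_n:=B_{2/3}(s,s-n)+B_{1/3}(s,s-n)$. The recurrence gives
\[
G_n=\frac{2s-n}{s-n}\,G_{n-1}-\frac{1}{s-n}\Bigl[(\tfrac23)^s(\tfrac13)^{s-n}+(\tfrac13)^s(\tfrac23)^{s-n}\Bigr],
\]
where the bracket equals $3^{n-2s}\bigl(2^s+2^{s-n}\bigr)$. Iterating down to $G_0=B(s,s)$ produces a main term and a collection of boundary terms.
\begin{itemize}
\item The main term is $B(s,s)\prod_{j=1}^n\frac{2s-j}{s-j}$. Using $z\Gamma(z)=\Gamma(z+1)$, this product equals $\frac{\Gamma(s)}{\Gamma(2s)}\cdot\frac{\Gamma(s)\Gamma(s-n)}{\Gamma(2s-n)}\cdot\frac{\Gamma(2s)}{\Gamma(s)}$, i.e. the main term is $\frac{\Gamma(s)\Gamma(s-n)}{\Gamma(2s-n)}$.
\item When $s=\tfrac12$, the factor $2s-1=0$ appears in the product, so the main term vanishes. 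This gives the convention in the statement.
\end{itemize}

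\textbf{The boundary terms (main obstacle).} It remains to show that the accumulated boundary terms equal exactly
\[
3^{n+1-2s}\sum_{i=0}^{n-1}\frac{1}{2^{i+1-s}}\,\frac{\Gamma(i+1-s)\,\Gamma(n-i-s)}{\Gamma(1-s)\,\Gamma(n+1-s)} .
\]
This is where I expect the work to lie. The boundary terms come in two families.
\begin{itemize}
\item The $x=\tfrac23$ terms carry powers $2^s3^{j-2s}$ together with partial products $\prod_{l=j+1}^n\frac{2s-l}{s-l}$.
\item The $x=\tfrac13$ terms carry $2^{s-j}3^{j-2s}$ with the same products.
\end{itemize}
The target sum has only the single geometric factor $2^{-i}$. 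So I would not match the two families term by term. Instead I would prove the combined identity itself by induction on $n$. Concretely, I would verify that the claimed closed form $R_n:=(\text{right-hand side})+(\text{sum})$ satisfies the same recurrence as $G_n$, namely
\[
(s-n)R_n=(2s-n)R_{n-1}-3^{n-2s}(2^s+2^{s-n}),
\]
with $R_0=B(s,s)$. This reduces to a finite identity among ratios $\Gamma(i+1-s)\Gamma(n-i-s)/\Gamma(n+1-s)$. I would check it by splitting off the $i=n-1$ term and applying Pascal-type manipulations with $\Gamma(z+1)=z\Gamma(z)$, in the same spirit as Lemma~\ref{lem:combinatorics_identity}. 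If this bookkeeping becomes unwieldy, the fallback is to express $B_x(s,s-n)=\frac{x^s}{s}\,{}_2F_1(s,\,1-s+n;\,s+1;\,x)$ and use \eqref{eq:euler_transf_form} and \eqref{eq:linear_transf_form_2} to evaluate the sum of the two incomplete integrals directly.
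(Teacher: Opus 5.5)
Your route is correct and genuinely different from the paper's; your stated fallback is essentially what the paper does.

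\textbf{Comparison of the two routes.} The paper writes the two integrals as $\frac{x^s}{s}{}_2F_1(n+1-s,s;s+1;x)$ at $x=\tfrac23,\tfrac13$. It applies Euler's transformation to move to the arguments $-2$ and $-\tfrac12$, and then applies the $x\mapsto 1/x$ connection formula to the $-2$ term. The main term $\Gamma(s)\Gamma(s-n)/\Gamma(2s-n)$ appears as a connection coefficient. The finite sum is identified as a full ${}_2F_1$ series at $-\tfrac12$ minus its tail, and this cancels everything else.

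You instead use the contiguous relation for incomplete beta integrals to get a first-order recurrence in $n$, and close it by induction. This has three advantages:
\begin{enumerate}
\item It is elementary, needing only one integration by parts.
\item It avoids analytic continuation and connection formulas.
\item It treats $s=\tfrac12$ uniformly. The main term is the finite product $B(s,s)\prod_{j=1}^n\frac{2s-j}{s-j}$, which vanishes through the factor $2s-1$. You need neither the convention $1/\Gamma(1-n)=0$ nor the paper's footnote, which in fact suggests a recurrence argument of your type for that case.
\end{enumerate}
The paper's computation, by contrast, derives the finite sum and the main term rather than requiring the closed form to be known in advance.

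\textbf{The open step.} The step you leave as a plan is where all the remaining content sits, so it must actually be carried out. It does close in a few lines.

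Put $a:=1-s$ and $(a)_k:=\Gamma(a+k)/\Gamma(a)$. The summand ratio is then $(a)_i(a)_{n-1-i}/(a)_n$. So the sum equals $3^{n+1-2s}2^{s-1}U_n/(a)_n$, where $U_n:=\sum_{i=0}^{n-1}2^{-i}(a)_i(a)_{n-1-i}$ and $U_0=0$.

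The main term satisfies the homogeneous recurrence. Using also $s-n=-(a+n-1)$, $2s-n=-(n-2+2a)$ and $(a)_n=(a+n-1)(a)_{n-1}$, your required recurrence for $R_n$ is equivalent to
\[
3U_n=(n-2+2a)\,U_{n-1}+\bigl(2+2^{1-n}\bigr)(a)_{n-1},\qquad n\ge 1 .
\]
To prove it, split $n-2+2a=(a+i)+(a+n-2-i)$. This gives
\[
(n-2+2a)(a)_i(a)_{n-2-i}=(a)_{i+1}(a)_{n-2-i}+(a)_i(a)_{n-1-i}.
\]
Multiply by $2^{-i}$ and sum over $0\le i\le n-2$:
\begin{itemize}
\item After the shift $j=i+1$, the first part gives $2U_n-2(a)_{n-1}$, since the $j=0$ term of $U_n$ is missing.
\item The second part gives $U_n-2^{1-n}(a)_{n-1}$, since the $i=n-1$ term is missing.
\end{itemize}
Adding these is exactly the identity. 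Together with $R_0=B(s,s)=G_0$, this completes your induction.

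\textbf{A small slip.} The value you display for $\prod_{j=1}^n\frac{2s-j}{s-j}$ is really the value of the main term. The product itself is $\frac{\Gamma(2s)\Gamma(s-n)}{\Gamma(s)\Gamma(2s-n)}$.
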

\begin{proof}
    The $n=0$ claim follows directly by the change of variables $\tau \mapsto 1-\tau$ and \eqref{eq::hypergeom_integral_repr}, so consider now $n\geq 1$. By changing variables and \eqref{eq::hypergeom_integral_repr} we have that 
    \begin{align}
        &\int_{0}^{\frac{2}{3}} \frac{ \tau^{s-1} d\tau}{(1-\tau)^{n+1-s}}  = \int_{0}^{1} \frac{\left(\frac{2}{3} \right)^s \tau^{s-1} d\tau}{(1-\tfrac{2}{3}\tau)^{n+1-s}} =  \frac{\left(\tfrac{2}{3} \right)^s}{s} {}_2F_1\left( \begin{array}{c} n+1-s,\, s \\[1mm] s+1 \end{array} \middle| \frac{2}{3} \right),\label{eq:prop1_first_term} \\
        &\int_{0}^{\frac{1}{3}} \frac{ \tau^{s-1} d\tau}{(1-\tau)^{n+1-s}} = \int_{0}^{1} \frac{\left(\frac{1}{3} \right)^s \tau^{s-1}  d\tau}{(1-\tfrac{1}{3}\tau)^{n+1-s}}  =  \frac{\left(\tfrac{1}{3} \right)^s}{s} {}_2F_1\left( \begin{array}{c} n+1-s,\, s \\[1mm] s+1 \end{array} \middle| \frac{1}{3} \right).\label{eq:prop1_second_term}
    \end{align}
    Furthermore, 
    \begin{equation}\label{eq:computation_1}
    \begin{aligned}
        &3^{n+1-2s}\sum_{i=0}^{n-1} \frac{1}{2^{i+1-s}}\frac{\Gamma(i+1-s)\Gamma(n-i-s)}{\Gamma(1-s)\Gamma(n+1-s)} \\
        &=3^{n+1-2s}\Bigg[\frac{2^{s-1}}{n-s} {}_2F_1\left( \begin{array}{c} 1-s,\, 1 \\[1mm] 1+s-n \end{array} \middle| -\frac{1}{2} \right) \\
        &\hspace{5cm}+\frac{2^{s-1-n}}{s} {}_2F_1\left( \begin{array}{c} n+1-s,\, 1 \\[1mm] s+1 \end{array} \middle| -\frac{1}{2} \right)\Bigg].
    \end{aligned}
        \end{equation}
    To see this we use \eqref{eq:def_gauss_hypergeom} and Euler's reflection formula to write
    \begin{equation}\label{eq:comp_2}
         \begin{aligned}
        &{}_2F_1\left( \begin{array}{c} 1-s,\, 1 \\[1mm] 1+s-n \end{array} \middle| -\frac{1}{2} \right) = \frac{\Gamma(1+s-n)}{\Gamma(1-s)}\sum_{j=0}^\infty \frac{\Gamma(j+1-s)}{\Gamma(1+s-n+j)}\left(-\frac{1}{2} \right)^j \\
        &=\frac{\Gamma(n+1-s)}{\Gamma(n+1-s)}\frac{\Gamma(1+s-n)}{\Gamma(1-s)}\sum_{j=0}^\infty \frac{\Gamma(j+1-s)}{\Gamma(1+s-n+j)}\frac{\Gamma(n-j-s)}{\Gamma(n-j-s)} \left(-\frac{1}{2} \right)^j \\
        &=\frac{(n-s)2^{1-s}}{\Gamma(n+1-s)\Gamma(1-s)}\sum_{j=0}^\infty \frac{\Gamma(j+1-s)\Gamma(n-j-s)}{2^{j+1-s}},
    \end{aligned}
    \end{equation}
    and similarly, but now also with the change of variables $j\mapsto (j+n)$, 
    \begin{equation}\label{eq:comp_3}
            \begin{aligned}
        &{}_2F_1\left( \begin{array}{c} n+1-s,\, 1 \\[1mm] s+1 \end{array} \middle| -\frac{1}{2} \right) = \frac{\Gamma(1+s)}{\Gamma(n+1-s)}\sum_{j=0}^\infty \frac{\Gamma(n+1-s+j)}{\Gamma(1+s+j)}\left(-\frac{1}{2} \right)^j \\
        &= \frac{\Gamma(1-s)}{\Gamma(1-s)}\frac{\Gamma(1+s)}{\Gamma(n+1-s)}\sum_{j=n}^\infty \frac{\Gamma(j+1-s)}{\Gamma(1+s-n+j)}\frac{\Gamma(n-j-s)}{\Gamma(n-j-s)}\left(-\frac{1}{2} \right)^{j-n} \\
        &= -\frac{2^{n+1-s}s}{\Gamma(n+1-s)\Gamma(1-s)}\sum_{j=n}^\infty \frac{\Gamma(j+1-s)\Gamma(n-j-s)}{2^{j+1-s}}.
    \end{aligned}
    \end{equation}
    By Euler's transformation \eqref{eq:euler_transf_form},
    \begin{align}\label{eq:comp_4}
         \frac{\left(\tfrac{1}{3} \right)^s}{s} {}_2F_1\left( \begin{array}{c} n+1-s,\, s \\[1mm] s+1 \end{array} \middle| \frac{1}{3} \right) = \frac{(\tfrac{3}{2})^{n+1-s}}{3^ss} {}_2F_1\left( \begin{array}{c} n+1-s,\, 1 \\[1mm] s+1 \end{array} \middle| -\frac{1}{2} \right),
    \end{align}
    so the final term in \eqref{eq:computation_1} cancels with \eqref{eq:prop1_second_term} when $n\geq 1$. 

    For \eqref{eq:prop1_first_term}, Euler's transformation \eqref{eq:euler_transf_form}, followed by \eqref{eq:linear_transf_form_2} and simplifying, yields\footnote{The application of \eqref{eq:linear_transf_form_2} here is still valid when $s=\frac12$, where we simply get $\frac{1}{\Gamma(2\cdot\frac{1}{2}-n)}=0$. For the skeptical reader, we note that one can avoid this pole entirely by treating the case $s=\tfrac 12$ separately, proving the result by induction using recurrence relations \cite[(4.21) and (4.22)]{duverney2024introduction}.}
    \begin{align*}
        &\left(\frac{2}{3} \right)^s \frac{1}{s} {}_2F_1\left( \begin{array}{c} n+1-s,\, s \\[1mm] s+1 \end{array} \middle| \frac{2}{3} \right) =  \frac{3^{n+1-2s}2^s}{s} {}_2F_1\left( \begin{array}{c} n+1-s,\, 1 \\[1mm] s+1 \end{array} \middle| -2\right) \\
        &=(\tfrac 32)^{n+1-2s}\frac{\Gamma(s)\Gamma(s-n)}{\Gamma(2s-n)}{}_2F_1\left( \begin{array}{c} n+1-s,\, n+1-2s \\[1mm] n+1-s \end{array} \middle| -\frac{1}{2}\right) \\
        &\qquad + \frac{3^{n+1-2s}2^{s-1}}{n-s}{}_2F_1\left( \begin{array}{c} 1-s,\, 1 \\[1mm] 1+s-n \end{array} \middle| -\frac{1}{2}\right) \\
        &=\frac{\Gamma(s)\Gamma(s-n)}{\Gamma(2s-n)}+\frac{3^{n+1-2s}2^{s-1}}{n-s}{}_2F_1\left( \begin{array}{c} 1-s,\, 1 \\[1mm] 1+s-n \end{array} \middle| -\frac{1}{2}\right)
    \end{align*}
    In the second equality we have also used that ${}_2F_1$ is symmetric in $a$ and $b$. The final equality follows from \eqref{eq::gauss_hypergeom_binomial}. We see that the final term cancels the first term in \eqref{eq:computation_1} when $n\geq 1$, and when $n=0$ it cancels with \eqref{eq:comp_4}. 
\end{proof}

\section*{Competing interests}
The author declares that they have no competing interests or other interests that might be perceived to influence the results and/or discussion reported in this paper.

\section*{Acknowledgments}
 R. \O. Lien received funding from the Research Council of Norway under Grant Agreement No. 325114 “IMod. Partial differential equations, statistics and data: An interdisciplinary approach to data-based modelling”. The author would like to thank Professor Mats Ehrnström for generous guidance, many helpful discussions, and detailed feedback throughout this project.
The author also thanks ChatGPT versions 5.2--6 for assistance with brainstorming, exploratory work and general feedback, in particular for helping to test approaches, organize the estimates and implement the code in Appendix~\ref{appendix_u_lim}. 

\bibliographystyle{plain} 
\bibliography{references}

\end{document}